\theoremstyle{theorem}
	\newtheorem{theorem}{Theorem}[section]
	\newtheorem{lemma}[theorem]{Lemma}
	\newtheorem{proposition}[theorem]{Proposition}
\theoremstyle{definition}
	\newtheorem{remark}[theorem]{Remark}
	\newtheorem{question}[theorem]{Question}
\newcommand{\N}{\mathbb{N}}
\newcommand{\Z}{\mathbb{Z}}
\newcommand{\Q}{\mathbb{Q}}
\renewcommand{\P}{\mathbb{P}}
\newcommand{\mC}{\mathscr{C}}
\newcommand{\mD}{\mathscr{D}}
\newcommand{\eps}{\varepsilon}
\newcommand{\es}{\emptyset}
\renewcommand{\tilde}{\widetilde}
\newcommand{\lessplus}{\footnotesize ~\rotatebox[origin=c]{90}{$\triangleplus$}~}
\newcommand{\lesstimes}{\footnotesize ~\rotatebox[origin=c]{90}{$\trianglecdot$}~}
\newcommand{\greaterplus}{\footnotesize ~\rotatebox[origin=c]{270}{$\triangleplus$}~}
\title{Counterexamples to generalizations of the Erd\H{o}s $B+B+t$ problem}
\author{Ethan Ackelsberg}
\address{Institute of Mathematics, École Polytechnique Fédérale de Lausanne, Lausanne, Switzerland}
\email{ethan.ackelsberg@epfl.ch}
\date{\today}
\keywords{}
\subjclass{Primary: 05D10; Secondary: 11B13, 11B30}
\begin{document}

\maketitle


\begin{abstract}
	Following their resolution of the Erd\H{o}s $B+B+t$ problem in \cite{kmrr_B+B}, Kra Moreira, Richter, and Robertson posed a number of questions and conjectures related to infinite configurations in positive density subsets of the integers and other amenable groups in \cite{kmrr_survey}.
	We give a negative answer to several of the questions and conjectures in \cite{kmrr_survey} by producing families of counterexamples based on a construction of Ernst Straus.
	
	Included among our counterexamples, we exhibit, for any $\eps > 0$, a set $A \subseteq \N$ with multiplicative upper Banach density at least $1 - \eps$ such that $A$ does not contain any dilated product set $\{b_1b_2t : b_1, b_2 \in B, b_1 \ne b_2\}$ for an infinite set $B \subseteq \N$ and $t \in \Q_{>0}$.
	We also prove the existence of a set $A \subseteq \N$ with additive upper Banach density at least $1 - \eps$ such that $A$ does not contain any polynomial configuration $\{b_1^2 + b_2 + t : b_1, b_2 \in B, b_1 < b_2\}$ for an infinite set $B \subseteq \N$ and $t \in \Z$.
	Counterexamples to some closely related problems are also discussed.
\end{abstract}


\section{Introduction}

In \cite{kmrr_B+B}, Kra, Moreira, Richter, and Robertson provided a positive resolution to the Erd\H{o}s $B+B+t$ problem (asked in \cite{erdos1, erdos2, erdos3}) by showing that any positive density subset of the integers contains a shift of an infinite sumset, i.e. a configuration of the form $B \oplus B + t = \{b_1 + b_2 + t : b_1, b_2 \in B, b_1 \ne b_2\}$ for some infinite set $B \subseteq \N$ and some $t \in \Z$ (and, moreover, $B$ may be taken as a subset of $A$).
One is then left with the natural question of which other infinite combinatorial configurations can be found in sets of positive density of the integers or of other amenable groups.
The survey article \cite{kmrr_survey} provides an extensive overview of questions along these lines, with some partial results in both the positive and negative direction.
The goal of the present paper is to provide a negative answer to several of the questions and conjectures posed in \cite{kmrr_survey}.


\subsection{Sumsets in abelian groups}

First, we address the generalization of the Erd\H{o}s $B+B+t$ problem to abelian groups.
Let $\Gamma$ be a countable discrete abelian group.
A \emph{F{\o}lner sequence} in $\Gamma$ is a sequence of finite subsets $(\Phi_N)_{N \in \N}$ of $\Gamma$ such that for any $x \in \Gamma$,
\begin{equation*}
	\lim_{N \to \infty} \frac{\left| (\Phi_N + x) \triangle \Phi_N \right|}{|\Phi_N|} = 0.
\end{equation*}
The \emph{upper and lower density of a subset $A \subseteq \Gamma$ along a F{\o}lner sequence $\Phi = (\Phi_N)_{N \in \N}$} are given by
\begin{equation*}
	\overline{d}_{\Phi}(A) = \limsup_{N \to \infty} \frac{\left| A \cap \Phi_N \right|}{|\Phi_N|} \qquad \text{and} \qquad \underline{d}_{\Phi}(A) = \liminf_{N \to \infty} \frac{\left| A \cap \Phi_N \right|}{|\Phi_N|}
\end{equation*}
The \emph{upper and lower Banach density} of $A \subseteq \Gamma$ are defined by $d^*(A) = \sup_{\Phi} \overline{d}_{\Phi}(A)$ and $d_*(A) = \inf_{\Phi} \underline{d}_{\Phi}(A)$, where the supremum and infimum are taken over all F{\o}lner sequences $\Phi$ in $\Gamma$.
In \cite{cm}, Charamaras and Mountakis prove a generalization of the main result of \cite{kmrr_B+B} for abelian groups whose ``even'' elements form a finite index subgroup:

\begin{theorem}[{\cite[Corollary 1.12]{cm}}\footnote{The main result of \cite{cm} applies to a class of amenable groups under a technical assumption on the set of squares. The result stated in Theorem \ref{thm: CM} is the special case of their result in the setting of abelian groups.}] \label{thm: CM}
	Let $\Gamma$ be a countable discrete abelian group such that $2\Gamma = \{2x : x \in \Gamma\}$ is a finite index subgroup of $\Gamma$.
	If $A \subseteq \Gamma$ is a set of positive upper Banach density, then there exists an infinite set $B \subseteq \Gamma$ and a shift $t \in \Gamma$ such that
	\begin{equation*}
		B \oplus B + t = \{b_1 + b_2 + t : b_1, b_2 \in B, b_1 \ne b_2\} \subseteq A.
	\end{equation*}
\end{theorem}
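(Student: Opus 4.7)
The plan is to adapt the ergodic-theoretic strategy of \cite{kmrr_B+B} to the abelian group setting. First, via a Furstenberg correspondence principle applied along a F{\o}lner sequence witnessing $d^*(A)$, one produces a measure-preserving $\Gamma$-system $(X, \mathcal{B}, \mu, (T_g)_{g \in \Gamma})$ together with a measurable set $E \subseteq X$ satisfying $\mu(E) = d^*(A) > 0$, in such a way that the problem reduces to finding an infinite $B \subseteq \Gamma$, a shift $t \in \Gamma$, and a point $x \in X$ with $T_{b_i + b_j + t} x \in E$ for every pair of distinct $b_i, b_j \in B$. The set $B$ is then built inductively: given $b_1, \ldots, b_n$ already selected, one must produce $b_{n+1}$ such that $b_i + b_{n+1} + t \in A$ for every $i \leq n$, while preserving enough flexibility to continue the construction indefinitely.

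The inductive step rests on a positivity result for a multicorrelation sequence of the form
\begin{equation*}
    \frac{1}{|\Phi_N|} \sum_{h \in \Phi_N} \prod_{i=1}^{n} \mathbf{1}_E(T_{h + b_i + t} x),
\end{equation*}
analyzed via projection onto an appropriate characteristic factor (the Kronecker factor, or more generally a Host--Kra factor, of the $\Gamma$-action). The hypothesis $[\Gamma : 2\Gamma] < \infty$ intervenes decisively here. The sumset structure $B \oplus B + t$ means one wants to parametrize configurations in a way that interacts cleanly with the doubling map $g \mapsto 2g$; since the diagonal contributions $2b_i$ must be separated from off-diagonal ones $b_i + b_j$ in the ergodic averaging, one pigeonholes $B$ into a single coset of $2\Gamma$, and this step preserves positive density precisely because $\Gamma/2\Gamma$ is finite. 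The ``halving'' manipulations from the $\Z$-case then go through on each such coset.

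The endpoint is a tree-like construction of $B$, driven by a Fatou-type lower bound on the measure of ``good'' points and a compactness argument that simultaneously selects the shift $t$ and a generic point $x$. The main obstacle I anticipate is the characteristic factor analysis in the abelian setting: for a general countable discrete abelian group the Host--Kra factors exist, but their description and the convergence of associated nilpotent-type ergodic averages along arbitrary F{\o}lner sequences is significantly more delicate than in $\Z$. Verifying that the requisite positivity survives this limit in a form strong enough to feed the tree construction is the technical heart of the proof, and the counterexamples in groups such as $\bigoplus_{n=1}^{\infty} \Z/2\Z$ (where $2\Gamma$ has infinite index) indicate that the hypothesis on $2\Gamma$ is essential rather than merely convenient.
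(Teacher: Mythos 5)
This statement is not proved in the paper: it is imported as \cite[Corollary 1.12]{cm} (Charamaras and Mountakis), so there is no in-paper argument to compare your proposal against. What you have written is broadly aligned in spirit with the ergodic approach that \cite{cm} and its antecedent \cite{kmrr_B+B} do take --- a Furstenberg correspondence principle, analysis on the Kronecker factor of the $\Gamma$-system, and use of $[\Gamma : 2\Gamma] < \infty$ to restrict attention to a single coset of $2\Gamma$ --- and your remark that $\bigoplus_n \Z/2\Z$ shows the hypothesis is essential is exactly what the present paper establishes in Theorem \ref{thm: 2-sumset}.

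That said, as a proof the proposal has a real gap, and it is not merely the one you flag about convergence of characteristic-factor averages. The inductive step as stated is insufficient: producing $b_{n+1}$ with $b_i + b_{n+1} + t \in A$ for all $i \le n$ does not guarantee that $b_{n+1}$ can be combined with the future elements $b_{n+2}, b_{n+3}, \dots$, and ``preserving enough flexibility to continue the construction indefinitely'' is precisely where the entire content of the argument lives. Positivity of the multicorrelation average
\begin{equation*}
\frac{1}{|\Phi_N|} \sum_{h \in \Phi_N} \prod_{i=1}^{n} \mathbf{1}_E(T_{h + b_i + t} x)
\end{equation*}
only certifies the existence of one more element at each stage; it does not control the measure of the set of admissible continuations, and a greedy selection based on it can easily dead-end. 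In \cite{kmrr_B+B} and \cite{cm} this obstruction is resolved by a specific dynamical device (Erd\H{o}s progressions, constructed via the Kronecker factor) that packages the future flexibility into the choice of the point $x$ and the shift $t$ up front, rather than re-establishing it at each step. Your sketch neither identifies this mechanism nor proposes a substitute, and it gives no lower bound mechanism for the displayed average, so it does not constitute a proof of the cited theorem.
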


We prove that the Erd\H{o}s $B+B+t$ problem for an abelian group $\Gamma$ has a negative answer whenever $2\Gamma$ is of infinite index, disproving \cite[Conjecture 5.14]{kmrr_survey}\footnote{Terence Tao has also observed that the condition $[\Gamma:2\Gamma] < \infty$ in Theorem \ref{thm: CM} is necessary (see \cite{tao-blog}).}.

\begin{theorem} \label{thm: 2-sumset}
	Suppose $\Gamma$ is a countable discrete abelian group and $2\Gamma$ is a subgroup of infinite index.
	Then for any $\eps > 0$, there exists $A \subseteq \Gamma$ such that $d^*(A) > 1 - \eps$ and if $B \subseteq \Gamma$ and $t \in \Gamma$ with $B \oplus B + t \subseteq A$, then $B$ is finite.
\end{theorem}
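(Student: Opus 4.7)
The plan is to construct $A$ via the quotient map $\pi\colon\Gamma\twoheadrightarrow\Gamma':=\Gamma/2\Gamma$, which by hypothesis is an infinite abelian group of exponent $2$, i.e.\ an infinite-dimensional vector space over $\mathbb{F}_2$, where an adapted Straus-type construction is feasible. I will take $A$ to be essentially the preimage $\pi^{-1}(A')$ of a suitable $A'\subseteq\Gamma'$, modified slightly inside countably many cosets of $2\Gamma$ to handle a secondary case.

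Suppose $B\subseteq\Gamma$ is infinite with $B\oplus B+t\subseteq A$. A dichotomy on $\pi(B)\subseteq\Gamma'$ divides the analysis. If $\pi(B)$ is infinite, then $\pi(B)\oplus\pi(B)+\pi(t)\subseteq A'$ realizes an infinite sumset-shift configuration in $\Gamma'$, so the problem reduces to constructing $A'$ avoiding all such. If $\pi(B)$ is finite, the pigeonhole principle yields an infinite subset of $B$ inside a single coset $c+2\Gamma$; writing $b_i=c+2h_i$ shows that every corresponding sum $b_i+b_j+t=2(c+h_i+h_j)+t$ lies in the single coset $2\Gamma+t$ and factors through the doubling map, reducing the problem within this coset to a non-shifted Straus-type problem in the subgroup~$2\Gamma$. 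This secondary obstacle is blocked by a nested sparse perturbation of $A$ inside each relevant coset of $2\Gamma$; since only countably many cosets arise, a diagonal argument absorbs the total perturbation into a density loss below $\eps/2$.

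The main construction of $A'\subseteq\Gamma'$ proceeds by fixing a descending chain of finite-index subgroups $H_1>H_2>\cdots$ of $\Gamma'$ with $\bigcap_k H_k=\{0\}$ (available since any countable infinite exponent-$2$ abelian group is isomorphic to $\bigoplus_{n\geq 1}\Z/2\Z$), and setting $A'=\Gamma'\setminus\bigcup_k T_k$, where each $T_k$ is a union of carefully chosen cosets of $H_k$ of density $\eps_k$, with $\sum_k\eps_k<\eps/2$. The combinatorial crux is that $\{T_k\}$ forms a hitting set: for every infinite $B'\subseteq\Gamma'$ and every $t'\in\Gamma'$, some pairwise sum $b'+b''+t'$ with $b'\ne b''$ in $B'$ lies in some $T_k$. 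The input to this claim is that since $\bigcap_k H_k=\{0\}$, an infinite $B'$ cannot lie in a single coset of every $H_k$, so $|\pi_{H_k}(B')|\geq 2$ at some level $k$; the trap cosets at level $k$ must then be designed to capture the resulting pairwise-sum pattern in $\Gamma'/H_k$ for every possible shift $t'$.

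The principal obstacle is the simultaneous calibration of the traps against the adversary's choices of $B'$ and $t'$: since $t'$ translates the trap cosets arbitrarily, each level's trap content must be rich enough to catch every possible shifted pairwise-sum pattern without exceeding its density budget $\eps_k$. I expect this to force the number of cosets per level $k$ to grow with $k$ while $\eps_k$ decays geometrically (for instance as $\eps_k=O(2^{-k})$), and the verification to proceed by a pigeonhole/linear-algebra argument exploiting the exponent-$2$ structure. Once $A'$ is constructed, the density bound $d^*(A)>1-\eps$ follows by pulling back a F{\o}lner exhaustion of $\Gamma'$ by finite subspaces through $\pi$ and controlling the secondary perturbation on cosets of $2\Gamma$.
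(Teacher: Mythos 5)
Your high-level idea—quotient to $\Gamma' = \Gamma/2\Gamma$, which is an infinite $\mathbb{F}_2$-vector space, and remove thin cosets of a descending chain of finite-index subgroups—is the right geometry, and it is essentially the special case $k=2$ of the paper's proof. However, there are two genuine gaps, and the dichotomy you introduce creates work that the paper's framing avoids entirely.

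First, the ``principal obstacle'' you flag in the primary case is left unresolved, and your proposed resolution points in the wrong direction. You suggest that the trap at each level $k$ ``must be rich enough to catch every possible shifted pairwise-sum pattern,'' which would indeed blow the density budget. The correct design assigns one level per shift: enumerate $\Gamma = \{t_1, t_2, \dots\}$, and for each $t_j$ choose a single finite-index subgroup $\Lambda_j \le \Gamma$ with $2\Gamma \subseteq \Lambda_j$ and $[\Gamma:\Lambda_j]$ so large that $\sum_j [\Gamma:\Lambda_j]^{-1} < \eps$. Then for each fixed $t_j$, if $B \oplus B + t_j \subseteq A$ with $B$ infinite, pigeonhole among the finitely many cosets of $\Lambda_j$ gives an infinite $B' \subseteq B$ in a single coset $\Lambda_j + x$; since $2x \in 2\Gamma \subseteq \Lambda_j$, one gets $B' \oplus B' + t_j \subseteq \Lambda_j + t_j$, which has been removed (up to a finite set). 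Also note your lemma ``$|\pi_{H_k}(B')| \geq 2$ at some level'' is the wrong direction and is not what powers the argument: what matters is the pigeonhole giving an infinite subset inside \emph{one} coset at the level chosen for the given shift, not that at some level $B'$ spreads across two cosets. The remaining technical point—that one can delete countably many sets of small density while keeping the density loss controlled—is the content of the paper's Lemma~\ref{lem: countable subadditivity} on approximate countable subadditivity of $\overline{d}_\Phi$, which your sketch does not articulate but would need.

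Second, your secondary case ($\pi(B)$ finite) is not actually a separate case in the correct framing, and your proposed handling of it is problematic. If $\pi(B)$ is finite, $B$ still has an infinite subset in a single coset of any finite-index $\Lambda_j \supseteq 2\Gamma$, so the argument above already applies; no separate perturbation inside cosets of $2\Gamma$ is needed. By contrast, the reduction you propose to a ``non-shifted Straus-type problem in $2\Gamma$'' is not clearly viable: $2\Gamma$ can be, say, trivial (if $\Gamma = \bigoplus \Z/2\Z$) or isomorphic to $\Z$ (if $\Gamma = \Z \oplus \bigoplus \Z/2\Z$), and in the latter case the doubling map is not injective and $2\Gamma$ has no useful chain of finite-index subgroups you could use in the same spirit; moreover, the relevant cosets $2\Gamma + t$ with $\pi(t) \in A'$ form most of $\Gamma$, so the claim that ``only countably many cosets arise'' and that a ``diagonal argument absorbs the total perturbation'' needs substantial justification. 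Working with finite-index subgroups of $\Gamma$ containing $2\Gamma$ (rather than pushing everything into $\Gamma/2\Gamma$ and treating the kernel separately) eliminates this difficulty outright.
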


In fact, we can prove a similar result for higher-order sumsets as well.
For $k \in \N$ and $B \subseteq \Gamma$, let
\begin{equation*}
	B^{\oplus k} = \left\{ \sum_{x \in F} x : F \subseteq B, |F| = k \right\}.
\end{equation*}

\begin{theorem} \label{thm: sumset}
	Let $k \in \N$.
	Suppose $\Gamma$ is a countable discrete abelian group and $k\Gamma$ is a subgroup of infinite index.
	Then for any $\eps > 0$, there exists $A \subseteq \Gamma$ such that $d^*(A) > 1 - \eps$ and if $B \subseteq \Gamma$ and $t \in \Gamma$ with $B^{\oplus k} + t \subseteq A$, then $B$ is finite.
\end{theorem}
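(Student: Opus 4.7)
The plan is to construct $A = \Gamma \setminus D$ for a sparse ``trap'' set $D$, following a construction of Ernst Straus. The hypothesis $[\Gamma:k\Gamma] = \infty$ will be used in two ways: it makes the quotient $G := \Gamma/k\Gamma$ a countably infinite abelian group of exponent dividing $k$, and it provides enough room in $\Gamma$ to keep $D$ of density below any prescribed $\eps$. I would fix a decomposition $G = \bigcup_{n \geq 1} G^{(n)}$ into an ascending chain of finite subgroups, pull these back to $\tilde G^{(n)} := \pi^{-1}(G^{(n)}) \leq \Gamma$ where $\pi : \Gamma \to G$ is the quotient map, and work with the resulting filtration $\Gamma = \bigcup_n \tilde G^{(n)}$.

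For the construction, I would pick a rapidly increasing sequence $n_1 < n_2 < \cdots$ and, for each $j$, a Straus-style ``trap'' $T_j$ sitting inside $\tilde G^{(n_{j+1})}$. Setting $D = \bigcup_{j \geq 1} T_j$ and $A = \Gamma \setminus D$, an estimate along a F{\o}lner sequence adapted to the chain $(\tilde G^{(n_j)})$ bounds the density of $D$ by a telescoping geometric sum controlled by $\sum_j [\tilde G^{(n_{j+1})}:\tilde G^{(n_j)}]^{-1}$, which can be made less than $\eps$ by spacing the $n_j$ sufficiently; hence $d^*(A) > 1 - \eps$.

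For the avoidance, I would suppose $B \subseteq \Gamma$ is infinite and $t \in \Gamma$ with $B^{\oplus k} + t \subseteq A$, and split on $\pi(B) \subseteq G$. When $\pi(B)$ is infinite, $k$ elements of $B$ can be selected whose $G$-images exhaust prescribed levels of the filtration, so that their sum (translated by $t$) lands in $T_j$ for some $j$, contradicting $B^{\oplus k}+t \subseteq A$. When $\pi(B)$ is finite, pigeonhole supplies an infinite $B_0 \subseteq B$ inside a single coset $c + k\Gamma$, and writing $B_0 = c + kB_0'$ yields
\[
    B_0^{\oplus k} + t = kc + t + k \cdot (B_0')^{\oplus k} \subseteq t + k\Gamma,
\]
so the configuration lies inside a single coset of $k\Gamma$, and the traps $T_j$ must also meet every such intra-coset sumset.

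The main obstacle is this second case: the traps $T_j$ need to be globally $\eps$-sparse yet catch infinite $k$-sumsets lying entirely inside a single coset of $k\Gamma$, which pulls against pure sparsity. Straus's layered construction resolves this tension by arranging each $T_j$ to simultaneously spread across many cosets of $\tilde G^{(n_j)}$ in $\tilde G^{(n_{j+1})}$ (for handling the infinite-projection case) and cut across cosets of $k\Gamma$ (for handling the intra-coset case), while its contribution to the global density remains dominated by $[\tilde G^{(n_{j+1})}:\tilde G^{(n_j)}]^{-1}$; the unbounded growth of these indices, supplied by the infinite-index hypothesis, provides the geometric room to meet both combinatorial requirements at every scale. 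The edge case in which $k\Gamma$ is itself finite reduces to the simpler situation where the intra-coset case is vacuous for infinite $B$, leaving only the infinite-projection case to handle.
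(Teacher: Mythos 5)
Your outline identifies the right high-level ideas (the quotient $\Gamma/k\Gamma$, a Straus-style sparse ``trap'' set, and the need to make the density contribution small by letting indices grow), but it leaves a genuine gap at exactly the point where the paper's proof does real work, and you acknowledge this yourself when you call the intra-coset case ``the main obstacle.'' Saying that ``Straus's layered construction resolves this tension'' is not a proof; nothing in the proposal specifies what the traps $T_j$ actually are, nor verifies that they simultaneously (a) catch an element of every infinite shifted $k$-sumset and (b) have small density. Those two demands genuinely pull against each other, and the resolution requires a concrete structural observation that the proposal does not contain.

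The missing idea is the following pigeonhole/divisibility fact, which collapses both of your cases into one and makes the trap construction transparent: if $\Lambda \le \Gamma$ is a \emph{finite-index} subgroup with $k\Gamma \subseteq \Lambda$, and $B \subseteq \Gamma$ is infinite, then some coset $\Lambda + x$ contains infinitely many elements of $B$, and since $kx \in k\Gamma \subseteq \Lambda$, the $k$-fold sumset of $B \cap (\Lambda + x)$ lies entirely in $\Lambda$. So $B^{\oplus k} \cap \Lambda$ contains an infinite $k$-sumset \emph{for every such $\Lambda$}, regardless of whether $\pi(B)$ is finite or infinite. The trap for a given shift $t$ is then simply $\Lambda_t + t$ for a suitably chosen finite-index subgroup $\Lambda_t \supseteq k\Gamma$; if $(A-t) \cap \Lambda_t$ is finite, no infinite $B$ can have $B^{\oplus k} + t \subseteq A$. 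The hypothesis $[\Gamma : k\Gamma] = \infty$ enters only to guarantee that such $\Lambda_t$ exist with $[\Gamma : \Lambda_t]$ arbitrarily large — the paper writes $\Gamma/k\Gamma$ as an infinite direct sum of cyclic groups of order dividing $k$ and truncates — so that $\sum_t [\Gamma : \Lambda_t]^{-1}$ can be made less than $\eps$. With this in hand, no bespoke filtration of $\Gamma$ or case split on $\pi(B)$ is needed; the subgroups themselves are the traps, and a countable-subadditivity lemma along a F{\o}lner sequence glues the shifted traps together while keeping the total density small. Your proposal gestures at all the external features of this argument but is missing the lemma that makes it go, and the two-case split you set up does not actually route around that absence.

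Two smaller points. First, your density bookkeeping uses the ratio $[\tilde G^{(n_{j+1})}:\tilde G^{(n_j)}]^{-1}$, but what actually needs to be summed is $[\Gamma:\Lambda_t]^{-1}$ over all $t \in \Gamma$, one trap per shift; the filtration quantities you write down do not obviously control this. Second, in the finite-$\pi(B)$ case you correctly reduce to $B_0^{\oplus k} + t \subseteq t + k\Gamma$, but then you are left needing a trap meeting an arbitrary intra-coset $k$-sumset, which is just the original problem one layer down. The subgroup lemma above avoids this regress: one does not need the trap to cover a full coset of $k\Gamma$, only a finite-index subgroup $\Lambda$ with $k\Gamma \subseteq \Lambda \subsetneq \Gamma$, whose index can be made large precisely because $[\Gamma : k\Gamma] = \infty$.
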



\subsection{Product sets in $(\N, \times)$}

The semigroup $(\N, \times)$ of positive integers under multiplication has the property that the subsemigroup of $k$th powers, $\{n^k : n \in \N\}$, has zero (multiplicative) upper Banach density for every $k \ge 2$.
With a small amount of additional work, we can use Theorem \ref{thm: sumset} to construct a counterexample in $(\N, \times)$ for all $k \ge 2$ simultaneously:

\begin{theorem} \label{thm: product set}
	Let $\eps > 0$.
	There exists $A \subseteq \N$ with $d_{\times}^*(A) > 1 - \eps$ such that if $B \subseteq \N$, $t \in \Q_{>0}$, and $k \ge 2$ with
	\begin{equation*}
		tB^{\odot k} = \left\{ t \prod_{i=1}^k b_i : b_1, \dots, b_k \in B, b_1 < \dots < b_k \right\} \subseteq A,
	\end{equation*}
	then $B$ is finite.
\end{theorem}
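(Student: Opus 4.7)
The plan is to reduce Theorem~\ref{thm: product set} to the $k = 2$ case of Theorem~\ref{thm: sumset}, applied to the countable discrete abelian group $\Gamma := \bigoplus_p \Z$ identified with $(\Q_{>0}, \times)$ via prime factorization. Since $\Gamma/2\Gamma \cong \bigoplus_p \Z/2\Z$ is infinite, the hypothesis of Theorem~\ref{thm: sumset} is satisfied. It suffices to find $A \subseteq \N$ with $d_\times^*(A) > 1 - \eps$ avoiding $tB^{\odot 2}$ for every infinite $B \subseteq \N$ and $t \in \Q_{>0}$: for $k \geq 3$, if $tB^{\odot k} \subseteq A$ with $B$ infinite, fix any $(k-2)$-element subset $F \subseteq B$, and set $B' := B \setminus F$ (still infinite) and $t' := t \prod_{b \in F} b \in \Q_{>0}$; then $t'(B')^{\odot 2} \subseteq tB^{\odot k} \subseteq A$, so a $k = 2$ counterexample handles all $k \geq 2$ at once.

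Theorem~\ref{thm: sumset} with $k = 2$ then yields $A_0 \subseteq \Q_{>0}$ with $d^*(A_0) > 1 - \eps$ and no $tB^{\odot 2} \subseteq A_0$ for infinite $B \subseteq \Q_{>0}$ and $t \in \Q_{>0}$. I would take $A := A_0 \cap \N$ (possibly after a multiplicative shift); the avoidance property is immediate, since any $tB^{\odot 2} \subseteq A$ with $B \subseteq \N$ infinite is a fortiori in $A_0$. To show $d_\times^*(A) > 1 - \eps$ in $(\N, \times)$, I would use that the multiplicative F{\o}lner sequences $\Phi_N = \{\prod_{i \leq k_N} p_i^{a_i} : 0 \leq a_i \leq N\}$ (with $k_N \to \infty$) lie inside $\N$ and are simultaneously F{\o}lner for $\Q_{>0}$: for $q = n/m \in \Q_{>0}$, the estimate $|q\Phi_N \triangle \Phi_N| \leq |n\Phi_N \triangle \Phi_N| + |\Phi_N \triangle m\Phi_N| \to 0$ follows from the bijectivity of multiplication. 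So it suffices to locate such a $(\Phi_N)$ along which $A_0$, or a multiplicative translate $qA_0$ (using shift-invariance of Banach density), has density $> 1 - \eps$.

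The hard part is precisely this density transfer. The hypothesis $d^*(A_0) > 1 - \eps$ produces only some F{\o}lner witness in $\Q_{>0}$, which may live in a region with negative prime exponents, and shifts mapping each $\Phi_N$ into $\N$ generally depend on $N$, so no single translate $qA_0$ obviously inherits the density bound. I expect to resolve this either by inspecting the Straus-type construction of $A_0$ underlying Theorem~\ref{thm: sumset} and arranging its witness F{\o}lner sequence to sit in the positive orthant $\bigoplus_p \N_{\geq 0}$ from the outset, or via an abstract density-transfer lemma: for any $A_0 \subseteq \Q_{>0}$ with $d^*(A_0) > 1 - \eps$, some $q \in \Q_{>0}$ satisfies $d_\times^*(qA_0 \cap \N) > 1 - \eps$.
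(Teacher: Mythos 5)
Your outline is essentially correct, and the reduction to $k=2$ is a genuine simplification that the paper does not make. Observe that if $tB^{\odot k} \subseteq A$ with $B \subseteq \N$ infinite and $k \ge 3$, then fixing $F \subseteq B$ with $|F| = k-2$ and setting $B' = B \setminus F$ and $t' = t\prod_{b \in F} b \in \Q_{>0}$ gives $t'(B')^{\odot 2} \subseteq tB^{\odot k} \subseteq A$ (as $F \cup \{b_1, b_2\}$ is a $k$-subset of $B$ for any distinct $b_1, b_2 \in B'$). The paper instead handles each $k$ separately: it invokes Theorem \ref{thm: sumset} for every $k \ge 2$, obtaining sets $A_k$ with densities $> 1 - 2^{-k}\eps$, and then pastes together the small complements $N_k = \Gamma \setminus A_k$ with a second application of Lemma \ref{lem: countable subadditivity}. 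Your reduction eliminates that pasting step entirely.

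The density-transfer issue you flag is real, and your proposed fix (a) is exactly what the paper does. The point is that Theorem \ref{thm: sumset} is deduced from Theorem \ref{thm: divisibility implies counterexample}, whose conclusion is not merely $d^*(A) > 1-\eps$ but the stronger $\underline{d}_\Phi(A) > 1-\eps$ for whatever F\o lner sequence $\Phi$ you feed in. So: start from any F\o lner sequence $\Phi = (\Phi_N)$ in $(\N,\times)$, push it forward to $\tilde\Phi_N = \varphi(\Phi_N) \subseteq \bigoplus_p \N_{\ge 0}$ (which is F\o lner in $\bigoplus_p \Z$, by exactly the $\triangle$-estimate you give), run the proof of Theorem \ref{thm: sumset} with $\tilde\Phi$ as the prescribed F\o lner sequence, and you get $A_0 \subseteq \bigoplus_p \Z$ with $\underline{d}_{\tilde\Phi}(A_0) > 1-\eps$. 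Since $\Phi_N \subseteq \N$, the set $A := \varphi^{-1}(A_0) \cap \N$ satisfies $|A \cap \Phi_N| = |A_0 \cap \tilde\Phi_N|$, hence $\underline{d}_\Phi(A) = \underline{d}_{\tilde\Phi}(A_0) > 1-\eps$ and $d^*_\times(A) > 1-\eps$. Your proposed alternative (b), an abstract translate-and-intersect density transfer from $\Q_{>0}$ to $\N$, is not needed and would not obviously hold in general (a single multiplicative translate cannot in general move a set witnessed on F\o lner sets deep in the negative orthant into one dense along $(\N,\times)$-F\o lner sets), so you should commit to route (a). With that, your argument is complete and correct.
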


The case $k = 2$ in Theorem \ref{thm: product set} gives a disproof of \cite[Conjecture 5.1]{kmrr_survey}.


\subsection{Polynomial configurations}

Our next counterexample deals with polynomial patterns in the integers.
Seeking a sumset analogue of the Furstenberg--S\'{a}rk\"{o}zy theorem \cite{furstenberg, sarkozy}, Kra, Moreira, Richter, and Robertson asked whether every set of positive density in the integers contains a configuration of the form
\begin{equation*}
	\{b_1^2 + b_2 : b_1, b_2 \in B, b_1 < b_2\} + t
\end{equation*}
for some infinite set $B \subseteq \N$ and some $t \in \Z$ (see \cite[Question 3.13]{kmrr_survey}).
We show that the answer is negative and remains negative for any polynomial of degree at least two in place of the squares:

\begin{theorem} \label{thm: sarkozy}
	Let $P(x) \in \Q[x]$ be an integer-valued polynomial\footnote{By an \emph{integer-valued polynomial}, we mean a polynomial taking integer values on the integers. That is, $P(\Z) \subseteq \Z$.} with $\deg{P} \ge 2$.
	Then for any $\eps > 0$, there exists a set $A \subseteq \N$ with $d^*(A) > 1 - \eps$ such that if $B \subseteq \N$, $t \in \Z$, and
	\begin{equation*}
		P(B) \lessplus B + t = \left\{ P(b_1) + b_2 + t : b_1, b_2 \in B, b_1 < b_2 \right\} \subseteq A,
	\end{equation*}
	then $B$ is finite.
\end{theorem}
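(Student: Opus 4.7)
My plan is to adapt Straus's construction, as used elsewhere in the paper, to produce a set $A \subseteq \N$ whose complement $S$ consists of short intervals placed at locations tailored to the polynomial $P$, and then argue by contradiction. The nonlinearity of $P$ (degree $d \ge 2$) is what will prevent any shift $t \in \Z$ from simultaneously aligning the multi-scale configuration $P(b_1) + b_2 + t$ with the gaps of $S$.

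For the construction, let $d = \deg P$ and fix $\eps > 0$. I would take
\[
	S = \bigsqcup_{n \ge n_0} \bigl[ P(n) - L_n, \, P(n) + L_n \bigr],
\]
where $L_n$ is a positive integer of order $\eps \cdot n^{d-1}$, and $n_0$ is large enough (depending on $P$ and $\eps$) that the intervals are pairwise disjoint. Since consecutive polynomial values satisfy $P(n+1) - P(n) \sim d \cdot n^{d-1}$, the density of $S$ in $[1, N]$ is at most $\eps$ for an appropriate constant in $L_n$; hence $A := \N \setminus S$ satisfies $d^*(A) > 1 - \eps$.

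Suppose now for contradiction that $B = \{b_1 < b_2 < \cdots\} \subseteq \N$ is infinite and $t \in \Z$ satisfy $P(B) \lessplus B + t \subseteq A$. After passing to a subsequence of $B$ with controlled (e.g., rapid) growth, I would analyze each pair $b_i < b_j$: the condition $P(b_i) + b_j + t \notin S$ forces $|P(b_i) + b_j + t - P(m)| > L_m$ for every $m$, and in particular for $m$ chosen near the integer closest to $(P(b_i) + b_j + t)^{1/d}$. Ranging over all pairs $(i, j)$ and exploiting the fact that $B$ provides infinitely many values of both $b_i$ and $b_j$, these Diophantine-type constraints (together with a compactness extraction in $\{0, 1\}^\Z$ to pass to a shift-limit of $A$ containing $P(B)$) should ultimately be inconsistent with $B$ being infinite.

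The main obstacle is executing this last step rigorously: one must show that the combined constraints across all pairs $b_i < b_j \in B$, for a single shift $t$, cannot be simultaneously satisfied by any infinite $B$. The degree $d \ge 2$ assumption should enter in two ways---first, in keeping the density of $S$ at most $\eps$; and second, and more subtly, in precluding the adversary's shift $t$ from aligning with the scales $P(b_i)$ uniformly as $b_i$ varies over $B$. In contrast, for $d = 1$ a single shift $t$ does suffice, which matches the positive result of Kra--Moreira--Richter--Robertson for the $B + B + t$ problem, so this scale-mismatch is precisely what distinguishes the polynomial counterexamples from the linear setting.
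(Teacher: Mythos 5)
The paper's proof takes a very different route from yours: it is purely modular, not archimedean. The crucial observation (Lemma \ref{lem: near divisibility}) is that if $P(B) \lessplus B \subseteq C$ with $B$ infinite, then pigeonholing $B$ into a residue class mod $Dk$ (where $D$ clears denominators of $P$) forces the entire configuration $P(B') \lessplus B'$ for a cofinite $B' \subseteq B$ to lie inside a \emph{single} residue class $k\N + P(n) + n$. Thus the union $D_k = \bigcup_n (k\N + P(n) + n)$ captures the configuration, and its density is $|R_k|/k$, where $R_k$ is the image of $n \mapsto P(n)+n \pmod k$. The second input (Lemma \ref{lem: permutation polynomial}, via Turnwald's classification of polynomials that are permutations mod infinitely many primes as compositions of linear and Dickson polynomials, plus Dirichlet) shows that $P(x)+x$ fails to be a permutation mod $p$ for a set of primes with $\sum 1/p = \infty$, so $\inf_k |R_k|/k = 0$. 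This is exactly condition (2) of the general Theorem \ref{thm: divisibility implies counterexample}, which then produces the set $A$.

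Your interval construction $S = \bigsqcup_n [P(n)-L_n,\, P(n)+L_n]$ with $L_n \asymp \eps n^{d-1}$ is a different kind of obstruction, and I do not think it can be made to work. The critical gap is the contradiction step, which you acknowledge is unresolved. The difficulty is structural: given your $A = \N \setminus S$, an adversary builds $B$ greedily. At stage $k$, the forbidden set for $b_k$ is $\bigcup_{i<k}(S - P(b_i) - t)$, a finite union of translates of $S$; each translate has local density about $2\eps/d$ at every scale, so the union bound only shows the forbidden set has density at most $(k-1) \cdot 2\eps/d$, which is useless once $k > d/(2\eps)$. To show the construction succeeds, you would need to prove that finitely many translates of $S$ \emph{cofinitely cover} $\N$ whenever $B$ grows—an assertion that is at best delicate and quite possibly false, since the translates have incommensurable quasi-periodic structure and will generically leave gaps of positive (though shrinking) density. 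Your construction also does not make essential use of $\deg P \ge 2$ in a way that forecloses the adversary; the ``scale-mismatch'' heuristic does not translate into a concrete obstruction the way the permutation-polynomial condition does.

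To repair your proof you would need to replace the interval removal with the congruence removal: use the non-surjectivity of $P(x)+x$ modulo suitable primes (which genuinely requires $\deg P \ge 2$, as linear polynomials are bijections mod every sufficiently large prime) to get sets of arbitrarily small density that are ``divisibility-stable'' for the configuration class $\mC_P$, and then invoke the Straus-style machinery (Lemma \ref{lem: countable subadditivity} and Theorem \ref{thm: divisibility implies counterexample}) to handle all shifts $t$ simultaneously.
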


\begin{remark}
	Let us comment briefly on the choice of ordering for the configuration appearing in Theorem \ref{thm: sarkozy}.
	One could also consider the related configuration $P(B) \greaterplus B = \{P(b_1) + b_2 : b_1, b_2 \in B, b_1 > b_2\}$.
	In \cite[Example 3.14]{kmrr_survey}, a counterexample of an even stronger form is given for this configuration.
	Namely, there exists a set $A \subseteq \N$ with $d(A) = 1$ such that $A$ does not contain any set of the form $B^2 \greaterplus C = \{b^2 + c : b \in B, c \in C, b > c\}$ with $B, C \subseteq \N$ infinite, and the only property of the squares used for the proof is that the set of squares $\{n^2 : n \in \N\}$ is of zero density, so the argument applies equally well to any polynomial of degree at least 2.
	
	The configuration $P(B) \lessplus B$ in Theorem \ref{thm: sarkozy} is better behaved in several respects.
	First, if $A \subseteq \N$ and $d^*(A) = 1$, then $A$ contains $P(B) \lessplus B$ for some infinite set $B \subseteq \N$.
	Second, if $A \subseteq \N$ and $d^*(A) > 0$, then there are infinite sets $B, C \subseteq \N$ such that $P(B) \lessplus C \subseteq A$, and, moreover, one may choose $C \subseteq A - P(0)$; see \cite[Corollary 3.33]{kmrr_survey}.
	
	Theorem \ref{thm: sarkozy} shows that, despite the improved behavior of ordered sumsets with the polynomial input being of smaller size, once may still avoid infinite configurations of the form $P(B) \lessplus B$ in all shifts of a set of positive density.
	We should note that our proof makes use of the algebraic structure of polynomials in addition to the fact that $P(\N)$ has zero density.
	A natural lingering question is whether one can construct counterexamples without this algebraic structure; see Question \ref{quest: density zero} and the accompanying discussion at the end of the paper.
\end{remark}


\subsection{Product-sum sets}

Finally, we show that \cite[Question 3.18]{kmrr_survey}, which is a variant of \cite[Question 3.13]{kmrr_survey}, also has a negative answer:

\begin{theorem} \label{thm: product-sum set}
	For any $\eps > 0$, there exists a set $A \subseteq \N$ with $d^*(A) > 1 - \eps$ such that if $B \subseteq \N$, $t \in \Z$, and
	\begin{equation*}
		B \lesstimes B \lessplus B + t = \left\{ b_1 \cdot b_2 + b_3 + t : b_1, b_2, b_3 \in B, b_1 < b_2 < b_3 \right\} \subseteq A,
	\end{equation*}
	then $B$ is finite.
\end{theorem}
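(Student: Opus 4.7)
The plan is to construct $A = \N \setminus E$ as a Straus-type set, paralleling the construction in the proof of Theorem~\ref{thm: sarkozy} with $P(x) = x^2$.

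As a first step, I would reduce the product-sum configuration to a ``linear-by-linear'' product-sum. Suppose $B = \{c_1 < c_2 < \cdots\} \subseteq \N$ is infinite and $B \lesstimes B \lessplus B + t \subseteq A$. Then for every pair $i < j$,
\[
c_i c_j + t + \bigl(B \cap (c_j, \infty)\bigr) \subseteq A,
\]
so $A$ contains infinitely many shifted tails of $B$, the shifts ranging over $(B \lesstimes B) + t$. Equivalently, fixing any $c_i \in B$ and passing to $B' = B \cap (c_i, \infty)$, we have $c_i B' \lessplus B' + t \subseteq A$. This reduction turns the three-parameter product-sum into a family (parametrized by $c_i \in B$) of asymmetric ordered sumsets scaled by $c_i$.

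Next, I would construct the set $A$. Let $(a_n)$ be a rapidly growing sequence placed along a quadratic scale $a_n \asymp n^2$, and let $(\ell_n)$ be an increasing sequence of interval lengths. Setting $E = \bigcup_n [a_n, a_n + \ell_n]$, one forces $d^*(A) > 1 - \eps$ by choosing the $\ell_n$'s small relative to the gaps $a_{n+1} - a_n$. The placement of the $a_n$'s along a quadratic scale is chosen to parallel the scaffold used in the proof of Theorem~\ref{thm: sarkozy} for $P(x) = x^2$, since the ``diagonal'' product $c_i c_{i+1} \asymp c_i^2$ plays the role of the quadratic $P(c_i)$.

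The main obstacle lies in verifying that $A$ catches every configuration $B \lesstimes B \lessplus B + t$. Unlike in the polynomial case, the image $\{c_i c_j : i < j\}$ does not in general have zero density, so a naive density argument on the ``product'' part does not suffice. My plan is to split into two regimes based on the relative sizes of $c_i c_j$ and $c_k$. When $c_k$ dominates, the configuration value $c_i c_j + c_k + t$ lies near $c_k$, and by varying $(i, j)$ one obtains a dense family of shifts, at least one of which must force some $c_k \in B$ into a translate $[a_n, a_n + \ell_n] - (c_i c_j + t)$; this step will use pigeonhole together with the infinitude of both $B \lesstimes B$ and $B$. When $c_i c_j$ dominates, the configuration is governed by the two-parameter family $\{c_i c_j\}$, and the calibration of the $a_n$'s along the quadratic scale ensures some product hits $[a_n, a_n + \ell_n]$, directly mirroring the $P(x) = x^2$ argument of Theorem~\ref{thm: sarkozy}. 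The delicate balancing of $(a_n, \ell_n)$ to handle both regimes simultaneously while preserving the density bound $d^*(A) > 1 - \eps$ is the technical heart of the construction, and I expect it to be the hardest part of the argument.
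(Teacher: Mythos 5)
There is a genuine gap, and it is fatal to the proposed construction. If you take $A = \N \setminus E$ with $E = \bigcup_n [a_n, a_n + \ell_n]$, $a_n \asymp n^2$, and $\ell_n$ small enough that the removed set has density below $\eps$, then the gaps $a_{n+1} - a_n - \ell_n$ grow without bound, so $A$ contains arbitrarily long intervals, i.e.\ $A$ is \emph{thick}. But, as the paper's remark on partition regularity points out, the family $\mC_{\times, +}$ contains every thick set: given a thick set $A$, one can greedily choose $b_1 < b_2 < \cdots$ so that at each stage all of the finitely many new values $b_i b_j + b_k$ ($i < j < k$) land inside a sufficiently long unbroken interval of $A$, producing an infinite $B$ with $B \lesstimes B \lessplus B \subseteq A$ (with $t = 0$). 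So your construction cannot yield a counterexample, regardless of how cleverly the $a_n, \ell_n$ are calibrated, and the ``two-regime'' analysis cannot be made to work.

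The missing idea is the modular structure of the configuration, which is what the paper exploits. If $b_1, b_2, b_3$ all lie in a single residue class $n \bmod k$, then $b_1 b_2 + b_3 \equiv n^2 + n \pmod{k}$. By pigeonhole, any infinite $B$ has an infinite subset $B'$ in one residue class mod $k$, and then $B' \lesstimes B' \lessplus B'$ is trapped in $D_k = \bigcup_{n}(k\N + n^2 + n)$. This is exactly the family of sets $\mD$ from the proof of Theorem~\ref{thm: sarkozy} with $P(x) = x^2$: one shows via Lemma~\ref{lem: permutation polynomial} (applied to $x^2 + x$, which vanishes at both $0$ and $-1$ and hence is never a permutation polynomial mod $p$) that $\inf_k \overline{d}(D_k) = 0$, and then Theorem~\ref{thm: divisibility implies counterexample} finishes. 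Crucially, the counterexample removes full arithmetic progressions with growing moduli (one translated copy of a small-density $D_k$ for each shift $t$), so the complement is nowhere thick; a union of finite intervals cannot serve the same purpose.

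Your initial reduction to the ``linear-by-linear'' form $c_i B' \lessplus B' + t$ is correct as far as it goes, but it points in the wrong direction: the relevant structure to isolate is residue classes mod $k$, not a decomposition into scaled two-parameter sumsets.
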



\subsection{Remarks about partition regularity versus density regularity}

Consider the following families of sets discussed in the above results:

\begin{itemize}
	\item	For an abelian group $\Gamma$: $\mC_{k\textup{-sumset}} = \{A \subseteq \Gamma : \exists B \subseteq \Gamma~\text{infinite}, B^{\oplus k} \subseteq A\}$.
	\item	$\mC_{k\textup{-product set}} = \left\{ A \subseteq \N : \exists B \subseteq \N~\text{infinite}, B^{\odot k} \subseteq A \right\}$ and $\mC_{\textup{product}} = \bigcup_{k \ge 2} \mC_{k\textup{-product set}}$.
	\item	For an integer-valued polynomial $P(x) \in \Q[x]$: $\mC_P = \left\{ A \subseteq \N : \exists B \subseteq \N~\text{infinite}, P(B) \lessplus B \subseteq A \right\}$.
	\item	$\mC_{\times, +} = \{A \subseteq \N : \exists B \subseteq \N~\text{infinite}, B \lesstimes B \lessplus B \subseteq A\}$.
\end{itemize}

Theorems \ref{thm: sumset}, \ref{thm: product set}, \ref{thm: sarkozy}, and \ref{thm: product-sum set} provide examples of sets with density arbitrarily close to 1 not containing any shift of an element of $\mC_{k\textup{-sumset}}$, $\mC_{\textup{product}}$, $\mC_P$, or $\mC_{\times, +}$, respectively.
We can therefore rephrase the main theorems as showing that these families of configuration together with their shifts are not \emph{density regular}.
This is made more meaningful by the following observation, which is a simple consequence of Ramsey's theorem.

\begin{proposition}
	The families $\mC_{k\textup{-sumset}}$, $\mC_{\textup{product}}$, $\mC_P$, and $\mC_{\times, +}$ are partition regular.
	That is, for each $\mC \in \left\{ \mC_{k\textup{-sumset}}, \mC_{\textup{product}}, \mC_P, \mC_{\times, +} \right\}$, if $C \in \mC$ and $C = \bigcup_{i=1}^r C_i$, then $C_i \in \mC$ for some $i \in \{1, \dots, r\}$.
\end{proposition}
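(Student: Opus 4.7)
The plan is to apply the infinite Ramsey theorem directly in each case. All four families are defined by the existence of an infinite set $B$ together with a finitary function of $k$-tuples from $B$ (for $k = 2, 3$, or an arbitrary fixed $k$) whose image is contained in the set in question. Thus if a set $C$ in one of these families is partitioned into finitely many pieces, I would color the relevant $k$-tuples from the witness set $B$ according to which piece the configuration value lands in and extract an infinite monochromatic subset $B' \subseteq B$; the restriction of the configuration to $B'$ then lies entirely in one piece of the partition.

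Concretely, for $\mC_{k\textup{-sumset}}$ suppose $C$ is witnessed by an infinite $B \subseteq \Gamma$ with $B^{\oplus k} \subseteq C$, and write $C = \bigcup_{i=1}^{r} C_i$. Define a coloring $\chi \colon \binom{B}{k} \to \{1, \dots, r\}$ by letting $\chi(F)$ be the least $i$ for which $\sum_{x \in F} x \in C_i$. The infinite Ramsey theorem yields an infinite $B' \subseteq B$ on which $\chi$ takes a constant value $i_0$, and then $(B')^{\oplus k} \subseteq C_{i_0}$, so $C_{i_0} \in \mC_{k\textup{-sumset}}$. The same template covers the remaining three cases: for $\mC_{\textup{product}}$, fix a $k$ witnessing $B^{\odot k} \subseteq C$ and color $\binom{B}{k}$ by the piece containing $\prod_{j=1}^{k} b_j$; for $\mC_P$, color pairs $\{b_1 < b_2\} \in \binom{B}{2}$ by the piece containing $P(b_1) + b_2$; for $\mC_{\times,+}$, color triples $\{b_1 < b_2 < b_3\} \in \binom{B}{3}$ by the piece containing $b_1 b_2 + b_3$. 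In every case, an infinite monochromatic subset produced by Ramsey's theorem gives the required witness for some $C_{i_0}$.

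The only mild subtlety worth recording is that the configurations defining $\mC_P$ and $\mC_{\times,+}$ are functions of ordered tuples rather than unordered sets; we resolve this by consistently using the natural order on $\N$, identifying a $k$-element subset of $\N$ with its unique increasing enumeration. I do not expect any genuine obstacle: the content of the proposition is essentially the observation that each of these four families is cut out by a finitary function of a $k$-tuple from the witness set, which makes the infinite Ramsey theorem immediately applicable.
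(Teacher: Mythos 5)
Your proof is correct and is exactly the argument the paper has in mind: the paper does not write out a proof but states that the proposition ``is a simple consequence of Ramsey's theorem,'' and your Ramsey-coloring argument (color $k$-tuples from the witness set $B$ by the cell of the partition containing the corresponding configuration value, then pass to an infinite monochromatic subset) is the standard realization of that remark, carried out correctly in all four cases.
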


\begin{remark}
	One can check that each of the families $\mC_{k\textup{-sumset}}$, $\mC_{\textup{product}}$, $\mC_P$, and $\mC_{\times, +}$ contains all thick sets\footnote{A set is \emph{thick} if it contains a shift of every finite set, or, equivalently, if it has upper Banach density equal to 1.} and deduce that every piecewise syndetic set\footnote{A set is \emph{syndetic} if it has non-empty intersection with every thick set, or, equivalently, if it has positive lower Banach density. A \emph{piecewise syndetic set} is an intersection of a syndetic set with a thick set. The family of piecewise syndetic sets is partition regular.} contains a shift of each of the configurations we have considered.
	The counterexamples we produce below will therefore be examples of non-piecewise syndetic sets with positive density.
	Such sets have been explored previously in \cite{bps, bhr}.
\end{remark}


\section{Revisiting the Straus example} \label{sec: straus}

All of the constructions in this paper are based on Ernst Straus's counterexample to a density version of Hindman's theorem:

\begin{theorem}[Ernst Straus, cf. {\cite[Theorem 2.20]{bbhs}}] \label{thm: straus}
	Let $\eps > 0$.
	There is a set $A \subseteq \N$ with $d^*(A) > 1 - \eps$ such that $A$ does not contain any configuration of the form
	\begin{equation*}
		FS \left( (x_n)_{n \in \N} \right) + t = \left\{ x_{i_1} + \dots + x_{i_k} + t : k \in \N, i_1 < \dots < i_k \right\}.
	\end{equation*}
	for an infinite sequence $x_1 < x_2 < \dots \in \N$ and $t \in \Z$.
\end{theorem}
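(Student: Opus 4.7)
Following Straus, the plan is to construct $A$ as a union of widely separated blocks of integers. The blocks themselves form a F{\o}lner sequence along which $A$ has density $1$, while the enormous gaps between blocks prevent any translate of an infinite $FS$-set from being contained in $A$.

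Fix $\eps > 0$. Choose positive integers $N_1 < N_2 < \dots$ growing extremely fast (say $N_{k+1} > 2^{N_k}$) and a sequence of lengths $\ell_k \to \infty$ growing slowly (say $\ell_k = k$). Define
\[
    A := \bigcup_{k=1}^{\infty} [N_k, N_k + \ell_k].
\]
The sequence $\Phi_k := [N_k, N_k + \ell_k]$ is F{\o}lner in $\N$ (since $\ell_k \to \infty$), and $\Phi_k \subseteq A$ gives $\overline{d}_\Phi(A) = 1$, hence $d^*(A) = 1 > 1 - \eps$.

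The main step is to show that for every infinite increasing sequence $x_1 < x_2 < \dots$ in $\N$ and every $t \in \Z$, some element of $FS((x_n)_n) + t$ fails to lie in $A$. Suppose for contradiction that $FS((x_n)_n) + t \subseteq A$. Then every finite subset sum of $(x_n)$ lies in $\bigcup_k [N_k - t, N_k + \ell_k - t]$, a union of short intervals separated by gaps of size at least $N_{k+1} - N_k - \ell_k$, which is comparable to $N_{k+1}$ for our choice. Tracking the partial sums $s_n = x_1 + \cdots + x_n$, each $s_n$ lies in some block $[N_{k_n} - t, N_{k_n} + \ell_{k_n} - t]$, and $k_n \to \infty$. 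Whenever $k_{n+1} > k_n$, the increment $x_{n+1} = s_{n+1} - s_n$ is forced to be of size at least $N_{k_{n+1}} - N_{k_n} - \ell_{k_n}$, so the sequence $(x_n)$ itself must grow super-exponentially after finitely many steps. One then derives a contradiction by examining alternative subset sums (for instance, $s_n - x_i$ or $s_{n+1} - x_j$ for various indices $i, j$), which must also lie in $A - t$; the sparsity and super-exponential spacing of the blocks force at least one such sum into a gap.

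The main obstacle is this last combinatorial step, which requires a delicate pigeonhole/Ramsey-type analysis matching the growth rates of $(x_n)$, $(N_k)$, and $(\ell_k)$. This is Straus's original argument, and the full proof is reproduced in \cite[Theorem 2.20]{bbhs}, to which we refer for the precise combinatorial argument.
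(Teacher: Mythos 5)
There is a fatal flaw in the construction itself, independent of the combinatorial step you defer to the reference. Your set $A$ satisfies $d^*(A) = 1$, which means $A$ is \emph{thick}: it contains a translate of every finite subset of $\N$. But every thick set contains an $FS$-set with shift $t = 0$. Indeed, suppose $x_1 < \dots < x_{n-1}$ have been chosen with $FS(x_1, \dots, x_{n-1}) \subseteq A$; since $A$ is thick there are infinitely many $y$ with $y + \left( \{0\} \cup FS(x_1,\dots,x_{n-1}) \right) \subseteq A$, so one may pick $x_n > x_{n-1}$ among them, and then $FS(x_1,\dots,x_n) \subseteq A$. Iterating yields an infinite increasing sequence with $FS\left((x_n)_{n \in \N}\right) \subseteq A$. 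Concretely, in your block construction this amounts to placing each new $x_n$ at the left endpoint of a block $[N_k, N_k+\ell_k]$ with $\ell_k \ge x_1 + \dots + x_{n-1}$, which exists because $\ell_k \to \infty$. The dilemma cannot be resolved within your framework: taking $\ell_k \to \infty$ makes $A$ thick and hence forces it to contain IP-sets, while taking $\ell_k$ bounded (with your super-exponential gaps) makes $d^*(A) = 0$ rather than $> 1 - \eps$.

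Straus's actual argument --- and the one the paper gives via Theorem \ref{thm: divisibility implies counterexample} --- is of an entirely different nature and does not rely on growth rates at all. It exploits the divisibility of IP-sets: $FS\left((x_n)_{n \in \N}\right)$ meets $k\N$ in an infinite (indeed, IP) set for every $k$. One therefore removes from $\N$, for each $t \in \Z$, a cofinite tail of the arithmetic progression $k_t\N + t$, with $k_t \to \infty$ fast enough that the union of the removed pieces has upper density $< \eps$; Lemma \ref{lem: countable subadditivity} supplies the bookkeeping, and in the paper's formulation one takes $\mC$ to be the family of sets containing an IP-set and $\mD = \{k\N : k \in \N\}$. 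The complement $A$ then has $(A - t) \cap k_t\N$ finite for every $t$, so no shift of $A$ can contain an IP-set. Note that this $A$ is deliberately \emph{not} thick: it misses cofinitely much of each progression $k_t\N + t$. Your citation of \cite[Theorem 2.20]{bbhs} is also misdirected, since what appears there is this arithmetic-progression argument, not a block construction.
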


Our goal is prove a similar result for the combinatorial configurations mentioned in the introduction, and the basic approach will be very similar.
In order to motivate the new constructions in this paper, let us briefly review the main ideas in the proof of Theorem \ref{thm: straus}.

The first observation to make is that IP-sets (sets of the form $FS((x_n)_{n \in \N})$ for an infinite sequence $x_1 < x_2 < \dots \in \N$) are ``divisible'' in the following sense: if $A$ contains an IP-set, then $A \cap k\N$ contains an IP-set for every $k \in \N$.
The idea to prove Theorem \ref{thm: straus} is then to remove from $\N$ an infinite arithmetic progression $k_t \N + t$ for each $t \in \Z$ with $k_t$ growing sufficiently quickly so that $\bigcup_{t \in \Z} (k_t \N + t)$ has small density.
Isolating the essential properties utilized by Straus, we can prove the following general result:

\begin{theorem} \label{thm: divisibility implies counterexample}
	Let $\Gamma$ be an abelian group.
	Let $\mC$ be a family of infinite subsets of $\Gamma$, and suppose there is a family $\mD$ of subsets of $\Gamma$ and a F{\o}lner sequence $\Phi$ with the following properties:
	\begin{enumerate}[(1)]
		\item	For any $C \in \mC$ and $D \in \mD$, one has $C \cap D \in \mC$, and
		\item	$\inf_{D \in \mD} \overline{d}_{\Phi}(D) = 0$.
	\end{enumerate}
	Then for any $\eps > 0$, there exists $A \subseteq \Gamma$ with $\underline{d}_{\Phi}(A) > 1 - \eps$ such that for any $t \in \Gamma$, $A - t \notin \mC$.
\end{theorem}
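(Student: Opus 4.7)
The plan is to follow the Straus strategy sketched before the statement. Assuming $\Gamma$ is countable (the standing assumption in this paper), enumerate $\Gamma = \{t_n\}_{n \ge 1}$. The role of hypothesis (1) is as a contrapositive tool: if for some $t \in \Gamma$ one can exhibit a $D \in \mD$ with $A \cap (D + t)$ finite, then $(A - t) \cap D$ is finite, hence not an element of $\mC$ (whose members are infinite), and so $A - t \notin \mC$ by (1). Thus it suffices to arrange, for each $n$, some $D_n \in \mD$ with $A \cap (D_n + t_n)$ finite, all the while keeping the total removed from $\Gamma$ of density less than $\eps$.

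Concretely, using hypothesis (2) pick $D_n \in \mD$ with $\overline{d}_{\Phi}(D_n) < \eps/2^{n+1}$. The F{\o}lner property gives $\overline{d}_{\Phi}(D_n + t_n) = \overline{d}_{\Phi}(D_n)$ (since $|(D_n + t_n) \cap \Phi_N|$ and $|D_n \cap \Phi_N|$ differ by at most $|(\Phi_N - t_n) \triangle \Phi_N| = o(|\Phi_N|)$), so one may choose $N_n$ large enough that $|\Phi_N \cap (D_n + t_n)|/|\Phi_N| < \eps/2^{n+1}$ for every $N \ge N_n$. The key technical maneuver is then to truncate: set
\begin{equation*}
	E_n := (D_n + t_n) \setminus (\Phi_1 \cup \cdots \cup \Phi_{N_n - 1}),
\end{equation*}
so that $E_n$ and $D_n + t_n$ differ only by a finite set, while the bound $|\Phi_N \cap E_n|/|\Phi_N| < \eps/2^{n+1}$ now holds \emph{uniformly} in $N$ (with the left side simply vanishing for $N < N_n$). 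Define $A := \Gamma \setminus \bigcup_{n \ge 1} E_n$.

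Subadditivity of counting measure then yields $|\Phi_N \setminus A|/|\Phi_N| \le \sum_n |\Phi_N \cap E_n|/|\Phi_N| \le \eps/2$ for every $N$, whence $\underline{d}_{\Phi}(A) \ge 1 - \eps/2 > 1 - \eps$. On the other hand, $A \cap (D_n + t_n) \subseteq (D_n + t_n) \setminus E_n$ is finite by construction, which, as explained above, forces $A - t_n \notin \mC$ for every $n$, covering all $t \in \Gamma$. The main obstacle addressed by the truncation is precisely this uniform-in-$N$ density control: the raw bound $\overline{d}_{\Phi}(D_n + t_n) < \eps/2^{n+1}$ is only asymptotic, and without trimming the initial F{\o}lner stages the infinitely many tails $E_m$ with $m$ large could concentrate on some fixed early $\Phi_N$ and destroy the density estimate there; after trimming, for each fixed $N$ the sum $\sum_n |\Phi_N \cap E_n|/|\Phi_N|$ is bounded by the geometric series term-by-term, independent of $N$.
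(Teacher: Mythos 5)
Your argument is correct and follows essentially the same approach as the paper. The paper abstracts the truncation step into a separate ``countable subadditivity'' lemma (Lemma~3.1) and then applies it to the sets $D_t + t$, whereas you inline the truncation directly, choosing $N_n$ so that the density contributions $\eps/2^{n+1}$ form a convergent geometric series from the outset; the paper's lemma is stated for an arbitrary summable family of densities and arranges $\sum_{j\le n}\eps(j,k)\le 1/n$, but the net effect --- remove from each $D_n+t_n$ its intersection with the early F{\o}lner sets so that the remaining tails cannot accumulate on any fixed $\Phi_N$ --- is identical. You also make explicit the (needed, and implicit in the paper) use of the F{\o}lner condition to pass from $\overline{d}_\Phi(D_n)$ to $\overline{d}_\Phi(D_n + t_n)$, and the contrapositive use of hypothesis (1) together with the standing assumption that members of $\mC$ are infinite is exactly the paper's argument.
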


Taking $\mC = \{A \subseteq \N : A~\text{contains an IP-set}\}$ and $\mD = \{k\N : k \in \N\}$ reproduces Theorem \ref{thm: straus}.


\section{Key lemma and proof of Theorem \ref{thm: divisibility implies counterexample}}

The key lemma for proving Theorems \ref{thm: divisibility implies counterexample} can be interpreted as an approximate version of countable subadditivity for the upper density along a F{\o}lner sequence:

\begin{lemma} \label{lem: countable subadditivity}
	Let $\Gamma$ be a countable discrete abelian group with a F{\o}lner sequence $\Phi$.
	Let $(A_n)_{n \in \N}$ be a countable family of subsets of $\Gamma$.
	There exists $A \subseteq \Gamma$ such that
	\begin{enumerate}
		\item	for any $n \in \N$, $A_n \setminus A$ is finite, and
		\item	$\overline{d}_{\Phi}(A) \le \sum_{n \in \N} \overline{d}_{\Phi}(A_n)$.
	\end{enumerate}
\end{lemma}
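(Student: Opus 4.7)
The obstacle is that upper density along a F{\o}lner sequence is not countably subadditive in general: a countable union of density-zero sets can easily have positive upper density. The idea is to truncate each $A_n$ by a finite amount in a coordinated way so that the resulting union stays controlled. The naive choice $A = \bigcup_n A_n$ plainly fails, and the natural first attempt of deleting an ``initial segment'' of each $A_n$ at independent rates $2^{-n}$ accumulates a constant error and gives $\overline{d}_{\Phi}(A) \le \sum_n \overline{d}_{\Phi}(A_n) + 1$, so the key point is to coordinate these truncations.

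Concretely, I would pick inductively a strictly increasing sequence $M_1 < M_2 < \cdots$ such that for every $n \in \N$, every $k \le n$, and every $N \ge M_n$,
\[
	\frac{|A_k \cap \Phi_N|}{|\Phi_N|} \le \overline{d}_{\Phi}(A_k) + 2^{-n}.
\]
This is possible since for each fixed $n$ only finitely many $A_k$ must be controlled, and the definition of $\overline{d}_{\Phi}$ furnishes the required threshold. Set $F_n = A_n \cap (\Phi_1 \cup \cdots \cup \Phi_{M_n})$, which is a finite subset of $A_n$, and define
\[
	A = \bigcup_{n \in \N} (A_n \setminus F_n).
\]
Condition (1) is then immediate, since $A_n \setminus A \subseteq F_n$ is finite for each $n$.

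For condition (2), fix $N \ge M_1$ and let $n$ be the unique index with $M_n \le N < M_{n+1}$. For $k > n$ we have $N < M_k$, so $\Phi_N \subseteq \Phi_1 \cup \cdots \cup \Phi_{M_k}$, which forces $(A_k \setminus F_k) \cap \Phi_N = \emptyset$. For $k \le n$ the uniform estimate applies, giving
\[
	|(A_k \setminus F_k) \cap \Phi_N| \le |A_k \cap \Phi_N| \le \bigl(\overline{d}_{\Phi}(A_k) + 2^{-n}\bigr) |\Phi_N|.
\]
Summing over $k \le n$ yields
\[
	\frac{|A \cap \Phi_N|}{|\Phi_N|} \le \sum_{k \in \N} \overline{d}_{\Phi}(A_k) + n \cdot 2^{-n}.
\]
As $N \to \infty$ one has $n \to \infty$ and $n \cdot 2^{-n} \to 0$, so taking $\limsup$ gives the desired inequality.

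The part I expect to require the most care is the diagonalization in the choice of $(M_n)$: making $M_n$ depend on the densities of \emph{all} of $A_1,\dots,A_n$ simultaneously (rather than on $A_n$ alone) is precisely what lets the error term collapse from a constant to $n \cdot 2^{-n}$. The other mild point is that, because the F{\o}lner sequence need not be nested, one must take $F_n$ to delete $A_n$ from every $\Phi_{N'}$ with $N' \le M_n$, not just from $\Phi_{M_n}$ itself; this is why $F_n$ is defined as $A_n \cap (\Phi_1 \cup \cdots \cup \Phi_{M_n})$.
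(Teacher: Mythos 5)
Your proof is correct and takes essentially the same approach as the paper's: a diagonal choice of thresholds so that at level $n$ the densities of $A_1,\dots,A_n$ are simultaneously controlled along $\Phi$, truncating each $A_n$ by removing its intersection with finitely many $\Phi_j$, and letting the error term ($n\cdot 2^{-n}$ in your version, $1/n$ in the paper's) tend to zero. The only differences are cosmetic, e.g.\ using a flat $2^{-n}$ error at stage $n$ instead of the paper's $\eps(j,k)$ with $\sum_{j\le n}\eps(j,k)\le 1/n$.
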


\begin{proof}
	Let $\delta_n = \overline{d}_{\Phi}(A_n)$, and let $\delta = \sum_{n \in \N} \delta_n$.
	If $\delta \ge 1$, there is nothing to prove (one may take $A = \Gamma$), so assume $\delta < 1$.
	By the definition of upper density along $\Phi$, we have
	\begin{equation*}
		\frac{|A_n \cap \Phi_k|}{|\Phi_k|} \le \delta_n + \eps(n,k)
	\end{equation*}
	for some $\eps(n,k) \in [0,1]$ with $\eps(n,k) \to 0$ as $k \to \infty$.
	
	We will construct a sequence $(k_n)_{n \in \N}$ be induction, put $A'_n = A_n \setminus \bigcup_{k < k_n} \Phi_k$, and then let $A = \bigcup_{n \in \N} A'_n$.
	Note that property (1) is automatically satisfied (regardless of the choice of $(k_n)_{n \in \N}$) since $A_n \setminus A \subseteq \bigcup_{k < k_n} \Phi_k$.
	
	Let $k_1 = 1$.
	For $n \in \N$, given $k_1, \dots, k_{n-1}$, choose $k_n > k_{n-1}$ such that
	\begin{equation*}
		\sum_{j=1}^n \eps(j,k) \le \frac{1}{n}
	\end{equation*}
	for all $k \ge k_n$.
	We check that property (2) is satisfied for this choice of $(k_n)_{n \in \N}$.
	For $k \in \N$, let $n_k = \max\{n \in \N : k_n \le k\}$.
	Note that $A \cap \Phi_k = \bigcup_{j=1}^{n_k} A_j' \cap \Phi_k \subseteq \bigcup_{j=1}^{n_k} A_j \cap \Phi_k$.
	Therefore
	\begin{equation*}
		\frac{|A \cap \Phi_k|}{|\Phi_k|} \le \sum_{j=1}^{n_k} \frac{|A_j \cap \Phi_k|}{|\Phi_k|} \le \sum_{j=1}^{n_k} \delta_j + \sum_{j=1}^{n_k} \eps(j,k) \le \delta + \frac{1}{n_k},
	\end{equation*}
	so
	\begin{equation*}
		\overline{d}_{\Phi}(A) \le \delta + \limsup_{k \to \infty} \frac{1}{n_k} = \delta.
	\end{equation*}
\end{proof}

\begin{remark}
	The proof of Lemma \ref{lem: countable subadditivity} does not use that $\Phi$ is a F{\o}lner sequence nor does it utilize any algebraic structure of $\Gamma$.
	The same result holds for any notion of upper density on a set obtained by taking a limit of finitely-supported probability measures.
\end{remark}

We can now give a very short proof of Theorem \ref{thm: divisibility implies counterexample}.

\begin{proof}[Proof of Theorem \ref{thm: divisibility implies counterexample}]
	Let $\eps > 0$.
	By property (2), we may choose a family of sets $(D_t)_{t \in \Gamma}$ in $\mD$ such that $\sum_{t \in \Gamma} \overline{d}_{\Phi}(D_t) < \eps$.
	Then by Lemma \ref{lem: countable subadditivity}, let $S \subseteq \Gamma$ such that $\overline{d}_{\Phi}(S) < \eps$ and $(D_t+t) \setminus S$ is finite for each $t \in \Gamma$.
	Let $A = \Gamma \setminus S$.
	Then $\underline{d}_{\Phi}(A) = 1 - \overline{d}_{\Phi}(S) > 1 - \eps$.
	Moreover, for each $t \in \Gamma$, the intersection $(A - t) \cap D_t = D_t \setminus (S_t - t)$ is finite.
	In particular, $(A - t) \cap D_t \notin \mC$, so by property (1), $A - t \notin \mC$.
\end{proof}


\section{Constructing the counterexamples} \label{sec: counterexamples}

Using Theorem \ref{thm: divisibility implies counterexample}, all that remains in order to prove Theorems \ref{thm: sumset}, \ref{thm: product set}, \ref{thm: sarkozy}, and \ref{thm: product-sum set} is to find the appropriate family of sets $\mD$ for each corresponding collection of combinatorial configurations $\mC$.


\subsection{Sumsets in abelian groups}

Let $\Gamma$ be a countable discrete abelian group.
Recall $\mC_{k\textup{-sumset}} = \{A \subseteq \Gamma : \exists B \subseteq \Gamma~\text{infinite}, B^{\oplus k} \subseteq A\}$.

\begin{lemma} \label{lem: structure mod k}
	Let $\Gamma$ be a countable discrete abelian group and $k \in \N$.
	If $C \in \mC_{k\textup{-sumset}}$ and $\Lambda \le \Gamma$ is a finite index subgroup with $k\Gamma \subseteq \Lambda$, then $C \cap \Lambda \in \mC_{k\textup{-sumset}}$.
\end{lemma}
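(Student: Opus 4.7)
The plan is to produce the desired infinite witness set $B' \subseteq \Gamma$ via a pigeonhole argument on the cosets of $\Lambda$. Since $C \in \mC_{k\textup{-sumset}}$, I can fix an infinite $B \subseteq \Gamma$ with $B^{\oplus k} \subseteq C$. Because $\Lambda$ has finite index in $\Gamma$, the group $\Gamma$ decomposes into finitely many cosets $\Lambda + a_1, \dots, \Lambda + a_r$. By the pigeonhole principle, at least one coset $\Lambda + a$ contains infinitely many elements of $B$; call this infinite subset $B'$.

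Next I would verify that $(B')^{\oplus k} \subseteq C \cap \Lambda$. Containment in $C$ is immediate since $B' \subseteq B$ gives $(B')^{\oplus k} \subseteq B^{\oplus k} \subseteq C$. For containment in $\Lambda$, note that any $k$-element subset $\{b_1,\dots,b_k\} \subseteq B'$ can be written as $b_i = a + \lambda_i$ with $\lambda_i \in \Lambda$, so
\begin{equation*}
    b_1 + \cdots + b_k = ka + (\lambda_1 + \cdots + \lambda_k).
\end{equation*}
The hypothesis $k\Gamma \subseteq \Lambda$ ensures $ka \in \Lambda$, and the remaining sum is in $\Lambda$ because $\Lambda$ is a subgroup. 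Hence the $k$-fold sum lies in $\Lambda$, establishing $(B')^{\oplus k} \subseteq \Lambda$ and therefore $C \cap \Lambda \in \mC_{k\textup{-sumset}}$.

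There is no serious obstacle: the entire content of the lemma is the observation that a $k$-fold sum of elements from a single coset of a subgroup containing $k\Gamma$ stays inside that subgroup, together with the fact that an infinite set necessarily clusters in at least one coset when the index is finite. The hypothesis $k\Gamma \subseteq \Lambda$ is used in exactly one place, namely to absorb the $ka$ term, and this is precisely why Theorem~\ref{thm: sumset} requires $k\Gamma$ to have infinite index to produce an abundant family $\mD$ of such finite-index subgroups later on.
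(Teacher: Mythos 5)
Your proof is correct and follows essentially the same route as the paper: pigeonhole on the finitely many cosets of $\Lambda$ to find an infinite $B' = B \cap (\Lambda + a)$, then observe that the $k$-fold sum $ka + (\lambda_1 + \cdots + \lambda_k)$ lands in $\Lambda$ because $ka \in k\Gamma \subseteq \Lambda$. The paper states this more tersely but the argument is the same.
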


\begin{proof}
	Let $B \subseteq \Gamma$ be an infinite set such that $B^{\oplus k} \subseteq C$.
	By the pigeonhole principle, there exists $x \in \Gamma$ such that $B' = B \cap (\Lambda + x)$ is infinite.
	Since $kx \in \Lambda$, we conclude $B'^{\oplus k} \subseteq C \cap \Lambda$.
\end{proof}

\begin{proof}[Proof of Theorem \ref{thm: sumset}]
	Let $\mD = \left\{ \Lambda \le \Gamma : k\Gamma \subseteq \Lambda~\text{and}~[\Gamma : \Lambda] < \infty \right\}$.
	A finite index subgroup $\Lambda \le \Gamma$ has uniform density $\frac{1}{[\Gamma : \Lambda]}$, so by Theorem \ref{thm: divisibility implies counterexample} and Lemma \ref{lem: structure mod k}, it suffices to show $\sup_{\Lambda \in \mD} [\Gamma : \Lambda] = \infty$.
	By assumption, $[\Gamma : k\Gamma] = \infty$, so $\Gamma/k\Gamma$ is an infinite group.
	Since each element of $\Gamma/k\Gamma$ has order dividing $k$, we may write $\Gamma/k\Gamma$ as an infinite direct sum of cyclic groups $\Gamma/k\Gamma = \bigoplus_{i \in \N} \Z/n_i\Z$ with $n_i \mid k$ for each $i \in \N$ (see, e.g., \cite[Theorem 17.2]{fuchs}).
	The subgroups $\tilde{\Lambda}_j = \left\{ (x_i)_{i \in \N} : x_i = 0~\text{for}~i < j \right\} \le \Gamma/k\Gamma$ satisfy $[\Gamma/k\Gamma : \tilde{\Lambda}_j] = \prod_{i < j} n_i$, so $\sup_{j \in \N} [\Gamma/k\Gamma : \tilde{\Lambda}_j] = \infty$.
	Lifting $\tilde{\Lambda}_j$ to a subgroup $\Lambda_j \le \Gamma$, we have $\Lambda_j \in \mD$ and $\sup_{j \in \N} [\Gamma : \Lambda_j] = \infty$.
\end{proof}


\subsection{Product sets in $(\N, \times)$}

\begin{proof}[Proof of Theorem \ref{thm: product set}]
	Consider the group isomorphism $\varphi : (\Q_{>0}, \times) \to (\bigoplus_{n \in \N} \Z, +)$ that maps an element to its prime factorization.
	That is, if $q = \prod_{i=1}^s p_i^{e_i}$, where $p_1 < p_2 < \dots$ is the increasing enumeration of the primes, then $\varphi(q) = (e_1, e_2, \dots, e_s, 0, \dots)$.
	
	Let $\Phi$ be a F{\o}lner sequence in $(\N, \times)$, and let $\tilde{\Phi}$ be the F{\o}lner sequence in $(\bigoplus_{n \in \N} \Z, +)$ given by $\tilde{\Phi}_N = \varphi(\Phi_N)$.
	By Theorem \ref{thm: sumset}, we may find, for each $k \ge 2$, a set $A_k \subseteq \bigoplus_{n \in \N} \Z$ such that $\underline{d}_{\tilde{\Phi}}(A_k) > 1 - 2^{-k} \eps$ and no shift of $A_k$ contains an infinite $k$-fold sumset $B^{\oplus k}$.
	(As stated in the introduction, Theorem \ref{thm: sumset} only guarantees $d^*(A_k) > 1 - 2^{-k} \eps$.
	However, from the conclusion of Theorem \ref{thm: divisibility implies counterexample}, we see that one may replace the upper Banach density with the lower density along the F{\o}lner sequence $\tilde{\Phi}$.)
	
	Put $N_k = (\bigoplus_{n \in \N} \Z) \setminus A_k$.
	Then $\overline{d}_{\tilde{\Phi}}(N_k) < 2^{-k} \eps$, so by Lemma \ref{lem: countable subadditivity}, there exists a set $N \subseteq \bigoplus_{n \in \N} \Z$ such that $N_k \setminus N$ is finite for each $k \ge 2$ and $\overline{d}_{\tilde{\Phi}}(N) < \eps$.
	
	Let $A = \varphi^{-1} \left( (\bigoplus_{n \in \N} \Z) \setminus N \right) \cap \N$.
	Then $d_{\times}^*(A) \ge \underline{d}_{\Phi}(A) = 1 - \overline{d}_{\tilde{\Phi}}(N) > 1 - \eps$.
	Suppose $B \subseteq \N$, $t \in \Q_{>0}$, $k \ge 2$, and $tB^{\odot k} \subseteq A$.
	Then $\varphi \left( tB^{\odot k} \right) \cap N = \es$.
	But $\varphi$ is a group isomorphism, so we have $\varphi \left( tB^{\odot k} \right) = \varphi(t) + \varphi(B)^{\oplus k}$.
	Therefore, from the construction of $N$, the set $\left( \varphi(t) + \varphi(B)^{\oplus k} \right) \cap N_k$ is finite.
	Hence, removing finitely elements from $B$, we obtain a cofinite subset $B' \subseteq B$ such that $\left( \varphi(t) + \varphi(B')^{\oplus k} \right) \subseteq A_k$.
	By construction, the set $A_k - \varphi(t)$ does not contain any infinite $k$-fold subset, so we conclude that $B'$ (and hence $B$) is a finite set.
\end{proof}


\subsection{Polynomial configurations}

\begin{lemma} \label{lem: near divisibility}
	Let $P(x) \in \Q[x]$ be an integer-valued polynomial.
	Let $k \in \N$.
	If $C \in \mC_P$, then there exists $n \in \N$ such that $C \cap (k\N + P(n) + n) \in \mC_P$.
\end{lemma}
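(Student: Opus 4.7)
The plan is to pass, by pigeonhole, to an infinite subset of $B$ lying in a single congruence class modulo some large modulus, so that every value $P(b_1) + b_2$ with $b_1, b_2$ in this subset falls into one fixed residue class modulo $k$. Choosing $n$ to be any element of this subset will then automatically force this residue to equal $P(n) + n \pmod{k}$, which is what the lemma requires.

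The key ingredient is the periodicity fact that for any integer-valued polynomial $P$ and any $k \in \N$, the map $x \mapsto P(x) \bmod k$ is periodic. Writing $P$ in the binomial coefficient basis as $P(x) = \sum_{j=0}^d c_j \binom{x}{j}$ with $c_j \in \Z$, this reduces to checking that each $\binom{x}{j} \bmod k$ is periodic in $x$ (with period dividing $kj!$, say). Let $M$ be such a period for $P \bmod k$, and set $M' = \mathrm{lcm}(M, k)$, so that $a \equiv b \pmod{M'}$ implies both $a \equiv b \pmod{k}$ and $P(a) \equiv P(b) \pmod{k}$.

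Now let $B \subseteq \N$ be infinite with $P(B) \lessplus B \subseteq C$. By the pigeonhole principle, there is a residue $r$ such that $B_0 := B \cap (M'\N + r)$ is infinite. For any $b_1, b_2 \in B_0$, we have $P(b_1) + b_2 \equiv P(r) + r \pmod{k}$. Fix any $n \in B_0$; then $P(n) + n \equiv P(r) + r \pmod{k}$ as well, so every element of $P(B_0) \lessplus B_0$ is congruent to $P(n) + n$ modulo $k$. Removing from $B_0$ the finitely many small elements so that all remaining pairs $b_1 < b_2$ satisfy $P(b_1) + b_2 > P(n) + n$, we obtain an infinite $B' \subseteq B_0$ with $P(B') \lessplus B' \subseteq C \cap (k\N + P(n) + n)$, which witnesses $C \cap (k\N + P(n) + n) \in \mC_P$.

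I do not foresee any serious obstacle. The argument is essentially a pigeonhole; the only mildly delicate point is the need to take $M$ possibly larger than $k$ because $P$ may have genuine rational (not integer) coefficients, which is handled via the binomial coefficient basis rather than by just taking $M = k$.
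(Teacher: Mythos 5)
Your proof is correct and is essentially the same as the paper's. Both arguments identify a modulus $M'$ for which residues of $P(\cdot)+\cdot \bmod k$ are constant on congruence classes mod $M'$, then apply the pigeonhole principle to restrict $B$ to an infinite subset in a single class; the paper obtains the period by clearing denominators (taking $D$ with $D\cdot P\in\Z[x]$, giving period $Dk$), while you obtain it via the binomial-coefficient basis, which is a cosmetic difference. Your additional step of choosing $n\in B_0$ and discarding finitely many small elements is a minor polish addressing a technicality (ensuring the values actually land in $k\N+P(n)+n$ rather than merely in the right residue class mod $k$) that the paper glosses over; this does not change the substance of the argument.
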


\begin{proof}
	Let $B \subseteq \N$ be an infinite set such that $P(B) \lessplus B \subseteq A$.
	Let $D \in \N$ such that $D \cdot P(x) \in \Z[x]$ has integer coefficients.
	Then for any $k \in \N$, if $n \equiv m \pmod{Dk}$, then $P(n) \equiv P(m) \pmod{k}$.
	By the pigeonhole principle, there exists $n \in \{0, 1, \dots, Dk-1\}$ such that $B' = B \cap (Dk\N + n)$ is infinite.
	Then $P(B') \lessplus B' \subseteq A \cap (k\N + P(n) + n)$.
\end{proof}

As suggested by Lemma \ref{lem: near divisibility}, the relevant family $\mD$ for applying Theorem \ref{thm: divisibility implies counterexample} will be the family of sets of the form $D_k = \bigcup_{n \in \N} \left( k\N + P(n) + n \right)$ for $k \in \N$.
In order to show that such sets have arbitrarliy small density (condition (2) in Theorem \ref{thm: divisibility implies counterexample}), we use the following property of polynomials:

\begin{lemma} \label{lem: permutation polynomial}
	Let $P(x) \in \Q[x]$ be an integer-valued polynomial with $\deg{P} \ge 2$.
	Let $\P_P$ be the set of prime numbers $p \in \P$ such that $P(\Z)$ represents every residue mod $p$.
	That is,
	\begin{equation*}
		\P_P = \left\{ p \in \P : \forall r \in \Z, \exists n \in \Z, P(n) \equiv r \pmod{p} \right\}.
	\end{equation*}
	Then
	\begin{equation*}
		\sum_{p \in \P \setminus \P_P} \frac{1}{p} = \infty.
	\end{equation*}
\end{lemma}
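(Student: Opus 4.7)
The plan is to apply the Chebotarev density theorem to produce a single shift $a \in \Z$ such that $P(x) \equiv a \pmod{p}$ is unsolvable for a positive density of primes; every such $p$ automatically lies in $\P \setminus \P_P$, and a set of primes of positive density has divergent reciprocal sum (e.g.\ by partial summation against the prime number theorem).

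First, I would verify that $P(x) - t$ is irreducible in $\Q(t)[x]$. Viewed as an element of $\Q[x,t]$ it is linear in $t$, so any factorization forces one factor to be a unit in $\Q[x]$; irreducibility over $\Q(t)[x]$ then follows from Gauss's lemma. Hilbert's irreducibility theorem then yields infinitely many $a \in \Z$ for which the specialization $P(x) - a \in \Q[x]$ is irreducible of degree $d = \deg P \ge 2$. Fix one such $a$, let $K/\Q$ be the splitting field of $P(x) - a$, and let $G = \mathrm{Gal}(K/\Q)$ act faithfully and transitively on the set $R$ of $d$ roots.

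The key group-theoretic input is Jordan's theorem: every transitive permutation group on at least two points contains a fixed-point-free element. This is a one-line consequence of Burnside's orbit-counting formula: $\sum_{g \in G} |\mathrm{Fix}(g)| = |G|$, while the identity alone contributes $d \ge 2$, so some non-identity $g$ must have $|\mathrm{Fix}(g)| = 0$. By Chebotarev's density theorem, the unramified rational primes $p$ whose Frobenius lies in the conjugacy class of this $g$ form a set of positive natural density. For each such $p$, the factorization type of $P(x) - a$ in $(\Z/p\Z)[x]$ matches the cycle structure of $g$ on $R$, and the absence of fixed points means the absence of linear factors; equivalently, $a \bmod p \notin P(\Z/p\Z)$, so $p \in \P \setminus \P_P$.

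I expect the only real obstacle to be the production of a fixed-point-free Frobenius. Having replaced $P$ by the irreducible shift $P(x) - a$ via Hilbert, the transitivity is automatic and Jordan plus Chebotarev finish the job. Without the irreducible shift one would have to analyse each irreducible factor of $P(x) - a$ and align fixed-point-free elements across all of them simultaneously, which is possible in principle but notationally unpleasant. Once $\P \setminus \P_P$ is known to have positive density, the divergence of $\sum_{p \in \P \setminus \P_P} 1/p$ is immediate from Mertens-type estimates.
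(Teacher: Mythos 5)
Your proposal is correct, and it takes a genuinely different route from the paper. The paper first reduces (via Turnwald's theorem on Schur's conjecture, \cite[Theorem 2]{turnwald}) to the case where $P$ is a composition of linear and Dickson polynomials; one Dickson factor $D_{n_i}(a_i, x)$ with $n_i \ge 2$ must appear, and the permutation criterion for Dickson polynomials forces $p \not\equiv 1 \pmod{q}$ for any prime $q \mid n_i$ whenever $p \in \P_P$. Dirichlet's theorem then gives a full arithmetic progression of primes in $\P \setminus \P_P$ and hence divergence of the reciprocal sum. Your route instead uses Hilbert irreducibility to shift $P$ to an irreducible $P(x) - a$, applies Jordan's theorem (via Burnside) to find a fixed-point-free element in the transitive Galois action on the roots, and invokes Chebotarev to get a positive-density set of primes $p$ for which $P(x) - a$ has no root mod $p$, hence $p \notin \P_P$. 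Both arguments in fact give a positive-density set of omitted primes, so neither is quantitatively stronger; your version avoids the rather deep classification input (Turnwald/Fried) in favor of the more standard Hilbert--Jordan--Chebotarev toolkit, and it makes the mechanism (a missed value $a$ rather than a missed residue class inferred from the Dickson structure) more transparent.

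Two minor points you should patch up when writing this out. First, $P$ has rational coefficients, so ``factorization of $P(x)-a$ in $(\Z/p\Z)[x]$'' requires you to first multiply by the common denominator $D$; you must discard the finitely many primes dividing $D$ and the leading coefficient (and the discriminant, for Dedekind/Chebotarev) before the factorization-type-equals-cycle-type dictionary applies. This affects only finitely many primes, so the density conclusion is unharmed, but it should be stated. Second, your aside that one could ``in principle'' align fixed-point-free elements across all irreducible factors of a reducible $P(x)-a$ is not true in general (a transitive-on-each-orbit action need not admit a globally fixed-point-free element, e.g.\ if some orbit is a singleton), so Hilbert irreducibility is doing genuine work here and is not merely a notational convenience; it is worth saying so rather than suggesting the reducible case is routine.
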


\begin{proof}
	Let us first reduce to the case that $P$ has integer coefficients.
	Let $D \in \N$ such that $Q(x) = D \cdot P(x) \in \Z[x]$.
	For any prime $p \in \P$ with $p \nmid D$, we have $p \in \P_P$ if and only if $p \in \P_Q$, since $D$ is invertible mod $p$.
	Therefore, $\P_P \subseteq \P_Q \cup \{p \in \P : p \mid D\}$.
	The set $\{p \in \P : p \mid D\}$ is finite, so
	\begin{equation*}
		\sum_{p \in \P \setminus \P_P} \frac{1}{p} = \infty \iff \sum_{p \in \P \setminus \P_Q} \frac{1}{p} = \infty.
	\end{equation*}
	
	We may therefore assume without loss of generality that $P$ has integer coefficients.
	In this case, $P$ is well-defined mod $p$ for every $p \in \P$, so $p \in \P_P$ if and only if $P$ acts as a permutation on $\Z/p\Z$; that is, $P$ is a permutation polynomial mod $p$.
	If $\P_P$ is finite, there is nothing to prove, so suppose $\P_P$ is infinite.
	Then $P$ is a composition of Dickson polynomials\footnote{The Dickson polynomials are defined recursively by $D_0(a,x) = 2$, $D_1(a,x) = x$, and $D_n(a,x) = x D_{n-1}(a,x) - a D_{n-2}(a,x)$, and a Dickson polynomial $D_n(a,x)$ (considered as a polynomial in $x$ with $a \in \Z$ fixed) is a permutation polynomial mod $p$ if and only if either $p \mid a$ and $\gcd(n, p-1) = 1$ or $p \nmid a$ and $\gcd(n, p^2-1) = 1$; see \cite[Lemma 1.4]{turnwald}.} and linear polynomials by \cite[Theorem 2]{turnwald}.
	Hence, we may assume $P = P_1 \circ P_2 \circ \dots \circ P_r$, where each $P_i(x)$ is either linear or a Dickson polynomial.
	Since $\deg{P} \ge 2$, at least one of the polynomials $P_i$ has $\deg{P_i} \ge 2$ and is therefore a Dickson polynomial.
	Let $i \in \{1, \dots, r\}$ such that $P_i$ is a Dickson polynomial, say $P_i = D_{n_i}(a_i, x)$ for some $a_i \in \Z$ and $n_i \ge 2$.
	If $p \in \P_P$, then $P_i$ must be a permutation polynomial mod $p$, so either $p \mid a_i$ and $\gcd(n_i, p-1) = 1$ or $p \nmid a_i$ and $\gcd(n_i, p^2-1) = 1$.
	In particular, if $q$ is a prime factor of $n_i$ and $p \in \P_P$, then $q \nmid p-1$.
	Therefore, for any prime factor $q \in \P$ of $n_i$,
	\begin{equation*}
		\{p \in \P : p \equiv 1 \pmod{q} \} \subseteq \P \setminus \P_P.
	\end{equation*}
	By Dirichlet's theorem on primes in arithmetic progressions, it follows that
	\begin{equation*}
		\sum_{p \in \P \setminus \P_P} \frac{1}{p} = \infty.
	\end{equation*}
\end{proof}

\begin{proof}[Proof of Theorem \ref{thm: sarkozy}]
	For each $k \in \N$, let $D_k = \bigcup_{n \in \N} \left( k\N + P(n) + n \right)$, and let $\mD = \left\{ D_k : k \in \N \right\}$.
	Note that $D_k$ has uniform density equal to $\frac{|R_k|}{k}$, where $R_k = \left\{ P(n) + n \mod{k} : n \in \Z \right\} \subseteq \Z/k\Z$.
	Therefore, by Theorem \ref{thm: divisibility implies counterexample} and Lemma \ref{lem: near divisibility}, it suffices to show $\inf_{k \in \N} \frac{|R_k|}{k} = 0$.
	
	Let $\P_{P+x}$ be the set as in Lemma \ref{lem: permutation polynomial} for the polynomial $P(x) + x$.
	We will consider $k$ of the form $k = \prod_{i=1}^s p_i$ with $p_1, \dots, p_s \in \P \setminus \P_{P+x}$ distinct.
	By the Chinese remainder theorem, $P(n) + n \equiv r \pmod{k}$ if and only if $P(n) + n \equiv r \pmod{p_i}$ for every $i \in \{1, \dots, s\}$.
	Therefore, $|R_k| \le \prod_{i=1}^s |R_{p_i}|$.
	By the definition of $\P_{P+x}$, we have $|R_{p_i}| \le p_i - 1$, so
	\begin{equation*}
		\frac{|R_k|}{k} \le \prod_{i=1}^s \frac{|R_{p_i}|}{p_i} \le \prod_{i=1}^s \left( 1 - \frac{1}{p_i} \right).
	\end{equation*}
	We have thus shown
	\begin{equation*}
		\inf_{k \in \N} \frac{|R_k|}{k} \le \prod_{p \in \P \setminus \P_{P+x}} \left( 1 - \frac{1}{p} \right).
	\end{equation*}
	But by Lemma \ref{lem: permutation polynomial}, the infinite product $\prod_{p \in \P \setminus \P_{P+x}} \left( 1 - \frac{1}{p} \right)$ is equal to 0, so we are done.
\end{proof}

\begin{remark}
	The family $\{P(x) \in \Q[x] : P~\text{is integer-valued and}~\deg{P} \ge 2\}$ is countable, so the set $A$ in Theorem \ref{thm: sarkozy} can be chosen independent of the polynomial $P$ by another application of Lemma \ref{lem: countable subadditivity}.
\end{remark}


\subsection{Product-sum sets}

\begin{lemma}
	Let $C \in \mC_{\times, +}$.
	Then for any $k \in \N$, there exists $n \in \N$ such that $A \cap (k\N + n^2 + n) \in \mC_{\times, +}$.
\end{lemma}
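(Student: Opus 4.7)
The plan is to imitate the pigeonhole argument from the proof of Lemma \ref{lem: near divisibility}, using the observation that the residue of $b_1 b_2 + b_3$ modulo $k$ depends only on the residues of the $b_i$ modulo $k$. Let $B \subseteq \N$ be an infinite set witnessing $C \in \mC_{\times, +}$, i.e.\ with $B \lesstimes B \lessplus B \subseteq C$. For a given $k \in \N$, partition $B$ into its residue classes modulo $k$; by pigeonhole one of these classes contains infinitely many elements of $B$, so there exists $n \in \{0, 1, \dots, k-1\}$ such that $B' := B \cap (k\Z + n)$ is infinite.

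For any $b_1 < b_2 < b_3$ in $B'$ we have $b_i \equiv n \pmod{k}$ for each $i$, and therefore
\[
	b_1 b_2 + b_3 \equiv n^2 + n \pmod{k}.
\]
Consequently $B' \lesstimes B' \lessplus B' \subseteq C \cap (k\Z + n^2 + n)$. After discarding the finitely many smallest elements of $B'$ (if necessary) to guarantee positivity of the resulting values, we obtain an infinite subset whose product-sum set is contained in $C \cap (k\N + n^2 + n)$, which shows $C \cap (k\N + n^2 + n) \in \mC_{\times, +}$ as required.

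There is no genuine obstacle here: the statement is essentially a one-step pigeonhole, and the only algebraic content is the elementary identity that both $b_1 b_2$ and $b_3$ reduce to $n^2$ and $n$ modulo $k$, respectively, regardless of the ordering constraint $b_1 < b_2 < b_3$. This is exactly the analogue for the product-sum configuration of the $P(n) + n$ residue appearing in Lemma \ref{lem: near divisibility}, and it sets up the subsequent application of Theorem \ref{thm: divisibility implies counterexample} with the family $\mD = \bigl\{ \bigcup_{n \in \N} (k\N + n^2 + n) : k \in \N \bigr\}$, whose densities can then be driven to zero via a Chinese remainder argument along primes where $x^2 + x$ fails to be a permutation polynomial.
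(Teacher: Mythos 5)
Your proof is correct and matches the paper's argument exactly: both use the pigeonhole principle to extract an infinite $B' = B \cap (k\N + n)$ and then observe that $b_1 b_2 + b_3 \equiv n^2 + n \pmod{k}$ for all elements of $B'$. The extra sentence about discarding small elements to stay in $\N$ is a harmless bookkeeping remark that the paper leaves implicit.
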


\begin{proof}
	Let $B \subseteq \N$ be an infinite set such that $B \lesstimes B \lessplus B \subseteq A$.
	By the pigeonhole principle, let $n \in \{0, 1, \dots, k-1\}$ such that $B' = B \cap (k\N + n)$ is infinite.
	Then $B' \lesstimes B' \lessplus B' \subseteq A \cap (k\N + n^2 + n)$.
\end{proof}

The family $\mD = \left\{ \bigcup_{n \in \N} \left( k\N + n^2 + n \right) : k \in \N \right\}$ is the same family of sets that appeared in the proof of Theorem \ref{thm: sarkozy} for $P(x) = x^2$, so the same argument proves Theorem \ref{thm: product-sum set}.


\section{Concluding remarks}

Each of Theorems \ref{thm: sumset}, \ref{thm: product set}, \ref{thm: sarkozy}, and \ref{thm: product-sum set} can be strengthened as follows.
Instead of simply concluding that $B$ is a finite set, we may make the stronger conclusion that $|B| \le M_t$, where $M_t$ is some finite bound depending on $t$.
That is, each shift of the set $A$ not only does not contain any infinite configuration of the desired type, but it in fact only contains configurations of bounded size.
This can be deduced using the following variant of Theorem \ref{thm: divisibility implies counterexample}:

\begin{theorem} \label{thm: divisibility implies counterexample finitary}
	Let $\Gamma$ be an abelian group.
	For each $m \in \N$, let $\mC_m$ be a family of subsets of $\Gamma$ and of cardinality at least $m$ such that $\mC_1 \supseteq \mC_2 \supseteq \dots$.
	Suppose there are families $\mD_1 \subseteq \mD_2 \subseteq \dots$ of subsets of $\Gamma$ and a F{\o}lner sequence $\Phi$ with the following properties:
	\begin{enumerate}[(1)]
		\item	For any $m, r \in \N$, there exists $M = M(m,r) \in \N$ such that if $C \in \mC_M$ and $D \in \mD_r$, then $C \cap D \in \mC_m$, and
		\item	$\inf_{r \in \N} \inf_{D \in \mD_r} \overline{d}_{\Phi}(D) = 0$.
	\end{enumerate}
	Then for any $\eps > 0$, there exists $A \subseteq \Gamma$ with $\underline{d}_{\Phi}(A) > 1 - \eps$ and $(M_t)_{t \in \Gamma}$ in $\N$ such that for any $t \in \Gamma$, $A - t \notin \mC_{M_t}$.
\end{theorem}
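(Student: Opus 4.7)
The plan is to mimic the proof of Theorem~\ref{thm: divisibility implies counterexample} while carefully tracking the cardinalities of the finite intersections that appear. Enumerate $\Gamma = \{t_1, t_2, \dots\}$ (the existence of the F{\o}lner sequence $\Phi$ implicitly makes $\Gamma$ countable). For each $n \in \N$, condition~(2) lets us choose an integer $r_n \in \N$ and a set $D_{t_n} \in \mD_{r_n}$ with $\overline{d}_\Phi(D_{t_n}) < \eps \cdot 2^{-n}$.

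Next, apply Lemma~\ref{lem: countable subadditivity} to the countable family $(D_{t_n} + t_n)_{n \in \N}$. Since $\overline{d}_\Phi(D_{t_n} + t_n) = \overline{d}_\Phi(D_{t_n})$ by translation invariance along F{\o}lner sequences, the lemma yields a set $S \subseteq \Gamma$ with $\overline{d}_\Phi(S) < \eps$ such that $K_n := |(D_{t_n} + t_n) \setminus S|$ is finite for every $n$. Put $A = \Gamma \setminus S$, so that $\underline{d}_\Phi(A) \ge 1 - \overline{d}_\Phi(S) > 1 - \eps$.

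A translation gives $(A - t_n) \cap D_{t_n} = D_{t_n} \setminus (S - t_n)$, which has precisely $K_n$ elements. Define $m_{t_n} := K_n + 1$ and $M_{t_n} := M(m_{t_n}, r_n)$, where $M(\cdot,\cdot)$ is supplied by condition~(1). If we had $A - t_n \in \mC_{M_{t_n}}$, then condition~(1), applied with $M = M_{t_n}$, $m = m_{t_n}$, and $D = D_{t_n} \in \mD_{r_n}$, would force $(A - t_n) \cap D_{t_n} \in \mC_{m_{t_n}}$; but every element of $\mC_{m_{t_n}}$ has cardinality at least $m_{t_n} = K_n + 1$, contradicting the computed cardinality $K_n$. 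Hence $A - t_n \notin \mC_{M_{t_n}}$, as required.

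The argument is essentially a bookkeeping refinement of Theorem~\ref{thm: divisibility implies counterexample} rather than a substantially new proof, so I do not foresee any genuine obstacle. The one new wrinkle is that the quantitative bound $M_{t_n}$ can only be determined \emph{after} Lemma~\ref{lem: countable subadditivity} produces the set $S$ and the finite residuals of size $K_n$; these residual sizes then feed into condition~(1) via $m_{t_n} = K_n + 1$ to yield the bounds $M_{t_n}$.
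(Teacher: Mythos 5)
Your proof is correct, and it is essentially the argument the paper has in mind: the paper omits the proof of this theorem, stating only that it follows ``along exactly the same lines'' as Theorem~\ref{thm: divisibility implies counterexample}, and your write-up is precisely that finitary adaptation, with the only new ingredient being the bookkeeping $m_{t_n} = K_n + 1$ and $M_{t_n} = M(m_{t_n}, r_n)$, which you identify correctly. You also make explicit the translation-invariance of $\overline{d}_\Phi$ along a F{\o}lner sequence (needed to pass from $\overline{d}_\Phi(D_{t_n})$ to $\overline{d}_\Phi(D_{t_n}+t_n)$ before invoking Lemma~\ref{lem: countable subadditivity}), a step that the paper's proof of the non-finitary version uses implicitly.
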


Theorem \ref{thm: divisibility implies counterexample finitary} can be proved along exactly the same lines as Theorem \ref{thm: divisibility implies counterexample}, and the claimed strengthenings of Theorems \ref{thm: sumset}, \ref{thm: product set}, \ref{thm: sarkozy}, and \ref{thm: product-sum set} hold by similar arguments to those presented in Section \ref{sec: counterexamples}.
To avoid significant repetition, we omit the proofs and leave the details to the interested reader. \\

In the proof of Theorem \ref{thm: sarkozy}, the reason we work with polynomials $P(x) \in \Q[x]$ with $\deg{P} \ge 2$ is that the set of values $P(\Z) \subseteq \Z$ is sparse (of zero upper Banach density).
This is a key aspect of what allows for property (2) in Theorem \ref{thm: divisibility implies counterexample} to hold.
A natural question, then, is whether the polynomial structure is really needed, or if the same result holds for arbitrary sparse sequences.

\begin{question} \label{quest: density zero}
	Does there exist an increasing function $f : \N \to \N$ such that $\lim_{n \to \infty} \frac{f(n)}{n} = \infty$ with the following property: for any $A \subseteq \N$ with $d(A) > 0$, there exists an infinite set $B \subseteq \N$ and $t \in \Z$ such that
	\begin{equation*}
		f(B) \lessplus B + t = \left\{ f(b_1) + b_2 + t : b_1, b_2 \in B, b_1 < b_2 \right\} \subseteq A?
	\end{equation*}
\end{question}


\section*{Acknowledgements}

This work is supported by the Swiss National Science Foundation grant TMSGI2-211214.
Thanks to Florian K. Richter and Dimitrios Charamaras for insightful conversations about sumsets in abelian groups and for comments on an earlier draft of the paper.



\begin{thebibliography}{99}

\bibitem{bbhs}
M. Beiglb\"{o}ck, V. Bergelson, N. Hindman, and D. Strauss.
\newblock	Multiplicative structures in additively large sets.
\newblock	\emph{J. Combin. Theory Ser. A} \textbf{113} (2006) 1219--1242.

\bibitem{bhr}
V. Bergelson, J. Huryn, and R. Raghavan.
\newblock	Discordant sets and ergodic Ramsey theory.
\newblock	\emph{Involve} \textbf{15} (2022) 89--130.

\bibitem{bps}
A. Bernardino, R. Pacheco, and M. Silva.
\newblock	The gap structure of a family of integer subsets.
\newblock	\emph{Electron. J. Combin.} \textbf{21} (2014) Paper 1.47, 8 pp.

\bibitem{cm}
D. Charamaras and A. Mountakis.
\newblock	Finding product sets in some classes of amenable groups.
\newblock	arXiv:2402.07779 (2024) 48 pp.

\bibitem{erdos1}
P. Erd\H{o}s.
\newblock	Problems and results in combinatorial number theory.
\newblock	\emph{Journées Arithmétiques de Bordeaux (Conf., Univ. Bordeaux, Bordeaux, 1974)}, Astérisque \textbf{24--25} (Société Mathématique de France, Paris, 1975) 295--310.

\bibitem{erdos2}
P. Erd\H{o}s.
\newblock	Problems and results on combinatorial number theory. III.
\newblock	\emph{Number Theory Day (Proc. Conf., Rockefeller Univ., New York, 1976)}, Lecture Notes in Math. \textbf{626} (Springer, Berlin, 1977) 43--72.

\bibitem{erdos3}
P. Erd\H{o}s.
\newblock	A survey of problems in combinatorial number theory.
\newblock	\emph{Ann. Discrete Math.} \textbf{6} (1980) 89--115.

\bibitem{fuchs}
L. Fuchs.
\newblock	\emph{Infinite Abelian Groups. Vol. I.}
\newblock	Pure Appl. Math. \textbf{36} (Academic Press, New York-London, 1970).

\bibitem{furstenberg}
H. Furstenberg.
\newblock	Ergodic behavior of diagonal measures and a theorem of {S}zemer\'{e}di on arithmetic progressions.
\newblock	\emph{J. Analyse Math.} \textbf{31} (1977) 204--256.

\bibitem{kmrr_B+B}
B. Kra, J. Moreira, F. K. Richter, and D. Robertson.
\newblock	A proof of Erd\H{o}s’s $B + B + t$ conjecture.
\newblock	arXiv:2206.12377v2 (2023) 14 pp.

\bibitem{kmrr_survey}
B. Kra, J. Moreira, F. K. Richter, and D. Robertson.
\newblock	Problems on infinite sumset configurations in the integers and beyond.
\newblock	arXiv:2311.06197v1 (2023) 37 pp.

\bibitem{sarkozy}
A. S\'{a}rk\"{o}zy.
\newblock	On difference sets of sequences of integers. {I}.
\newblock	\emph{Acta Math. Acad. Sci. Hungar.} \textbf{31} (1978) 125--149.

\bibitem{tao-blog}
T. Tao.
\newblock Notes on the $B+B+t$ theorem.
\newblock \emph{What's new} (blog). \href{https://terrytao.wordpress.com/2024/04/24/notes-on-the-bbt-theorem/}{https://terrytao.wordpress.com/2024/04/24/notes-on-the-bbt-theorem/}

\bibitem{turnwald}
G. Turnwald.
\newblock	On Schur's conjecture.
\newblock	\emph{J. Austral. Math. Soc. Ser. A} \textbf{58} (1995) 312--357.

\end{thebibliography}
\end{document}